\begin{document}

\title[Stability of the D1Q3 with relative velocity]{A stability property for a mono-dimensional three velocities scheme with relative velocity}

\author[F.~Dubois]{Fran\c cois Dubois$^{1,2}$}
\author[B.~Graille]{Benjamin Graille$^2$}
\author[S.V.R.~Rao]{S. V. Raghurama Rao$^3$}

\email{benjamin.graille@u-psud.fr}
\address{$^1$ Conservatoire National des Arts et M\'etiers, LMSSC laboratory, Paris, France}
\address{$^2$ Laboratoire de Math\'ematiques d'Orsay, Univ. Paris-Sud, CNRS, Universit\'e Paris-Saclay, 91405 Orsay, France}
\address{$^3$ Department of Aerospace Engineering, Indian Institute of Science, Bangalore - 560 012, India}

\thanks{This work was supported by the Indo-French Centre for Applied Mathematics (IFCAM)}

\date{\today}

\begin{abstract}
In this contribution, we study a stability notion for a fundamental linear one-dimensional lattice Boltzmann scheme, this notion being related to the maximum principle. We seek to characterize the parameters of the scheme that guarantee the preservation of the non-negativity of the particle distribution functions.
In the context of the relative velocity schemes, we derive necessary and sufficient conditions for the non-negativity preserving property.
These conditions are then expressed in a simple way when the relative velocity is reduced to zero. For the general case, we propose some simple necessary conditions on the relaxation parameters and we put in evidence numerically the non-negativity preserving regions.
Numerical experiments show finally that no oscillations occur for the propagation of a non-smooth profile if the non-negativity preserving property is satisfied. 
\end{abstract}

\keywords{non-negativity preserving property,  advection process, numerical oscillations}
\subjclass[2010]{76M28, 65M12}

\maketitle


\section{Introduction} 
\label{sec:intro}

Studying stability of lattice Boltzmann schemes is a  non-trivial problem.
Classically for this purpose, the scheme is linearized around a constant state and a Fourier analysis is performed.  We refer to the work of Lallemand and Luo  \cite {lallemand_theory_2000} for the D2Q9 scheme applied to hydrodynamics.  Note also the work of Ginzburg
{ \it et al.} \cite {ginzburg_truncation_2012} extending the Fourier analysis to a wide variety of different two and three dimensional lattice Boltzmann schemes. Instabilities and their interpretattion in terms of bulk viscosity has been proposed by Dellar \cite{dellar_nonhydrodynamic_2002}. But no mathematical analysis has been performed. 

A new way of improving stability is proposed by Geier \cite{geier_cascaded_2006}, who proposed a new generalized lattice Boltzmann scheme with the approach of relative velocities and utilized it for hydrodynamics applications~\cite{dubois_lattice_2015}.
The tentative of analysis of this method for   a two-dimensional scalar linear equation has been also proposed~\cite{dubois_stability_2015}. 

Even if it is a difficult task, it is well known that Fourier analysis is not the best method for analyzing nonlinear hyperbolic equations. Total variation diminishing schemes, developed for suppressing oscillations in higher order CFD algorithms, provide an alternative nonlinear stability analysis tool for analysing the schemes for nonlinear wave propagation. The convergence of such schemes is well established \cite{osher_muscl_1985}.
The underlying stability notion concerns the maximum principle.
This notion can be extended to nonlinear cases and a first attempt has been proposed in  \cite {caetano_result_2019} for lattice Boltzmann schemes for the D1Q2 scheme for scalar nonlinear hyperbolic equations. 

In this contribution, we propose to investigate the stability in the maximum sense, of a linear mono-dimensional lattice Boltzmann scheme with three velocities.  More precisely, we look to a positivity constraint for a particle distribution function 
in the context of a relative velocities. 

In Section~2, we describe the scheme and the underlying advection model. More precisely, the local relaxation step is written as a linear operator on the particle distribution functions. If all the coefficients of the underlying matrix are nonnegative, the non-negativity of the distribution is maintained during this step. Because the transport step is just a change of locus, the non-negativity is maintained for the  whole time step of the scheme.  The question is then to find appropriate conditions to handle this property.
In Section~3, a necessary and sufficient condition is derived on the parameters in order to ensure that the scheme has the stability property.
In Section~4, we completely describe the classical case  where the relative velocity is reduced to zero. 
In Section~5, the general case is presented. With an analytical study for necessary conditions and a numerical one for a complete description of the stability zones.
In Section~6, numerical experiments show the correlation of the positivity constraint for a particle distribution and the presence of oscillations for discontinuous profiles.



\section{Description of the framework}
\label{sec:problem}

\subsection{Description of the scheme}

In this contribution, we investigate a mono-dimensional three velocities linear lattice Boltzmann scheme with relative velocity \cite{dubois_lattice_2015}. 
Denoting \(\Delta x\) the spatial step, \(\Delta t\) the time step, and \(\lambda=\Delta x/\Delta t\) the lattive velocity, 
this scheme can be described in a generalized D.~d'Humi\`ere's framework \cite{dhumiere_generalized_1992}: 
\begin{enumerate}[(1)]
\item the 3 velocities \(c_1=-1\), \(c_2=0\), and \(c_3=1\);
\item the 3 associated distributions \(f_1\), \(f_2\), and \(f_3\);
\item the 3 moments \(\rho\), \(q(u)\), and \(\varepsilon(u)\) given by
\begin{equation*}
 \rho = \smashoperator{\sum_{1\leq j\leq 3}} f_j,
 \quad
 q(u) = \lambda \smashoperator{\sum_{1\leq j\leq 3}} (c_j-u) f_j,
 \quad
 \varepsilon(u) = 3 \lambda^2 \smashoperator{\sum_{1\leq j\leq 3}} (c_j-u)^2 f_j 
 - 2\lambda^2 \smashoperator{\sum_{1\leq j\leq 3}} f_j,
\end{equation*}
where \(u\) is a given scalar representing the relative velocity;
\item the equilibrium value of the 3 moments
\begin{equation*}
 \rho^\eq = \rho,
 \quad
 q^\eq(u) = \lambda (V-u)\rho,
 \quad
 \varepsilon^\eq(u) = \lambda^2 (3u^2-6uV+\alpha) \rho,
\end{equation*}
where \(V\) and \(\alpha\) are given scalars (without loose of generality, we assume that \(V>0\));
\item the 2 relaxation parameters \(s\) and \(s'\) such that the relaxation phase reads
\begin{equation*}
 q^\star(u) = (1-s) q(u) + s q^\eq(u),
 \quad
 \varepsilon^\star(u) = (1-s') \varepsilon(u) + s' \varepsilon^\eq(u).
\end{equation*}
\end{enumerate}

In this formalism, the moments are defined as polynomial functions of the discrete velocities and the discrete distribution functions.
Indeed, introducting \(P_1=1\), \(P_2=\lambda X\), and \(P_3=\lambda^2 (3X^2-2)\), the three moments read
\begin{equation*}
\rho = \sum_{1\leq j\leq 3} P_1(c_j-u) f_j,
\quad
q(u) = \sum_{1\leq j\leq 3} P_2(c_j-u) f_j,
\quad
\varepsilon(u) = \sum_{1\leq j\leq 3} P_3(c_j-u) f_j.
\end{equation*}

The equilibrium values are chosen such that the equilibrium distributions do not depend on the relative velocity \(u\). Indeed, we have:
\begin{equation*}
 f_j^\eq = \tfrac{1}{6} \rho \bigl(
 2 + 3c_jV + (3c_j^2-2)\alpha
 \bigr), 
 \quad 1\leq j\leq 3.
\end{equation*}
Note that this scheme can be used (see {\it e.g.}  \cite {dubois-lallemand_2009}) to simulate a scalar transport equation with constant velocity \(\lambda V\) given by
\begin{equation*}
 \partial_t\rho + \lambda V \partial_x\rho = 0.
\end{equation*}

Denoting \(x_k=k\Delta x\), \(k\in{\mathbb Z}\), the discret spatial points and \(t^n=n\Delta t\), \(n\in{\mathbb N}\), the discret time points, one time step of the scheme reads
\begin{equation*}
 f_j(t^n + \Delta t, x_k+c_j \Delta t) 
 = f_j(t^{n+1}, x_{k+j})
 = f_j^\star(t^n,x_k),
 \quad
 1\leq j\leq 3.
\end{equation*}

\subsection{Matrix notation for the relaxation step}

In this contribution, we are concerned with the {\em non-negativity of the particle distribution functions}. This property can be viewed as refering to a weak  maximum principle for linear schemes. Indeed, it is always possible, by adding a constant, to assume that all the particle distribution functions are initialy nonnegative. If the scheme ensures that this property of non-negativity remains  as time marches, each particle distribution function is then bounded, as their total sum  is conserved. Moreover, as the transport step consists simply in exchanging the position of the particle distribution functions, we focus on the relaxation step.

We use a matrix notation for the relaxation step as it can be read as a multiplication by a matrix. As this step is local in space, we omit the dependancy on time and space. We define the vector of the distribution functions \(\vectF\)
\begin{equation*}
    \vectF = ( f_1 \; f_2 \; f_3 )^T.
\end{equation*}
One relaxation step then reads
\begin{equation*}
 \vectF^\star = \matR \vectF,
 \quad
\end{equation*}
where the matrix \(\matR\) is defined by
\begin{equation*}
    \matR = \matiM \matiT \Bigl(
        \matid + \matS \bigl(
            \matT\matE\matiT-\matid
        \bigl)
    \Bigr) \matT \matM,
\end{equation*}
with
\begin{gather*}
    \matM = \begin{pmatrix}
        1 & 1 & 1\\
        -\lambda & 0 & \lambda\\
        \lambda^2 & -2\lambda^2 & \lambda^2
    \end{pmatrix},
    \quad
    \matT = \begin{pmatrix}
        1 & 0 & 0\\
        -\lambda u & 1 & 0\\
        3\lambda^2 u^2 & -6\lambda u & 1
    \end{pmatrix},
    \\
    \matS = \begin{pmatrix}
        0&0&0\\ 0&s&0 \\ 0&0&s'
    \end{pmatrix},
    \quad
    \matE = \begin{pmatrix}
        1 & 0 & 0 \\
        V \lambda & 0 & 0 \\
        \alpha \lambda^2 & 0 & 0
    \end{pmatrix},
    \quad
    \matid = \begin{pmatrix}
        1&0&0 \\ 0&1&0 \\ 0&0&1
    \end{pmatrix}.
\end{gather*}

The coefficients of the matrix \(\matM\) are obtained by the relations \(\matM_{k,j} = P_k(c_j)\), \(1\leq k,j\leq 3\) and those of the matrix \(\matT\) by the change of basis formula:
\(\matT_{k,l}\) is the coefficient of the \(l^{\text{th}}\)-element \(P_l(c_j)\) in the definition of \(P_k(c_j-u)\) according to 
\begin{equation*}
P_k(c_j-u) = \sum_{1\leq l\leq 3} \matT_{k,l} P_l(c_j), \qquad 1\leq k, j\leq 3.
\end{equation*}
The matrix \(\matM\) is then the change of basis that transforms the vector \(\vectF\) into the vector \(\vectMz = (\rho, q(0), \varepsilon(0))^T\):
\begin{equation*}
    \vectMz = \matM \vectF, \qquad \vectM = \matT \matM \vectF.
\end{equation*}
The matrix \(\matT\) can then be viewed as the change of basis matrix from the classical moments without relative velocity toward the moments with relative velocity.

\subsection{Remark on the choice of the moments}

Note that the last moment \(\varepsilon(u)\) that is chosen in this contribution is not the energy but a moment that is orthogonal to the two first ones, \(\rho\) and \(q(u)\).
In this section, we show that all the results of the contribution would be identical by choosing the last moment as the energy: the relaxation matrix \(\matR\) would  still  be the same.   

We consider two schemes with two different choices of polynomials: the moments of the first scheme are defined by \((P_1,P_2,P_3)\) while the moments of the second scheme by \((\widehat P_1,\widehat P_2,\widehat P_3)\).
The first moment is the same in both schemes to be able to simulate the same transport equation. We then have \(\widehat P_1=P_1\).
We define \(\matC\) the change of basis matrix associated to the tranformation \(\matM\) into \(\matMh\): 
\begin{equation*}
    \matMh = \matC \matM.
\end{equation*}
The first line of \(\matC\) is then \((1, 0, 0)\).

\begin{proposition}
    We assume that the equilibrium values of the distribution functions are the same, that is \(\matEh=\matC\matE\), and that the relaxation parameters are the same, that is \(\matSh=\matC\matS\).
    Then, we have \(\matRh = \matR\) for all \((s,s')\) iff 
    \begin{equation*}
        \widehat P_2 \in \spn(P_1, P_2),
        \quad
        \widehat P_3 \in \spn(P_1, P_3),
        \quad
        \text{in}
        \quad
        \mathbb{R}[X]/X(X-1)(X+1). 
    \end{equation*}
\end{proposition}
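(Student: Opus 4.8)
The plan is to turn the matrix equality \(\matRh=\matR\) (required for all \((s,s')\)) into two scalar conditions on \(\matC\), and then to read those conditions off as the stated membership relations. First I would propagate the hypothesis \(\matMh=\matC\matM\) to the remaining matrices. Because \(\widehat P_k=\sum_m \matC_{km}P_m\) in \(\mathbb{R}[X]/X(X-1)(X+1)\), the defining relation of the relative-velocity change of basis gives \(\widehat{\matT}\,\matMh=\matC\,\matT\matM\), hence \(\widehat{\matT}=\matC\matT\matC^{-1}\). The first row of \(\matC\) being \((1,0,0)\) forces the same for \(\matC^{-1}\), and since \(\matE\) has only its first column nonzero this yields \(\matE\matC=\matE\). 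Combined with the hypothesis \(\matEh=\matC\matE\), I then obtain \(\widehat A:=\widehat{\matT}\matEh\widehat{\matT}^{-1}-\matid=\matC(\matT\matE\matiT-\matid)\matC^{-1}=\matC A\matC^{-1}\), where \(A:=\matT\matE\matiT-\matid\).

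Next I would put \(\matR\) in a compact form. Using \(\matiT\matT=\matid\) and \(\matiM\matiT\matT\matM=\matid\), one gets \(\matR=\matid+\matiM\matiT\,\matS A\,\matT\matM\), and the analogous formula for \(\matRh\) with the hatted matrices. Substituting \(\widehat{\matT}=\matC\matT\matC^{-1}\), \(\widehat A=\matC A\matC^{-1}\), \(\matMh=\matC\matM\) and cancelling each interior \(\matC^{-1}\matC\), all occurrences of \(\matC\) collapse and I expect \(\matRh=\matid+\matiM\matiT(\matC^{-1}\matSh\matC)\,A\,\matT\matM\). As \(\matiM\matiT\) and \(\matT\matM\) are fixed invertible factors, \(\matRh=\matR\) for every \((s,s')\) is equivalent to \((\matC^{-1}\matSh\matC-\matS)A=0\) for every \((s,s')\); using that the two schemes carry the same relaxation parameters, so that \(\matSh\) and \(\matS\) are the same diagonal matrix, and left-multiplying by \(\matC\), this becomes the commutator condition \([\matS,\matC]\,A=0\) for all \((s,s')\).

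Finally I would extract the scalar conditions. Decomposing \(\matS=s\operatorname{diag}(0,1,0)+s'\operatorname{diag}(0,0,1)\) splits the requirement into \([\operatorname{diag}(0,1,0),\matC]\,A=0\) and \([\operatorname{diag}(0,0,1),\matC]\,A=0\). The structural key is that the explicit \(A\) has image exactly \(\spn(e_2,e_3)\) (its last two columns are \(-e_2\) and \(-e_3\)), so each commutator only has to annihilate \(e_2\) and \(e_3\). A direct evaluation of \([\operatorname{diag}(0,1,0),\matC]\) on \(e_2\) and on \(e_3\) returns \(-\matC_{32}\,e_3\) and \(\matC_{23}\,e_2\), which forces \(\matC_{23}=\matC_{32}=0\); conversely, these vanishings make \(\matC\) map each of \(\spn(e_2)\) and \(\spn(e_3)\) into itself, so both commutators kill \(\spn(e_2,e_3)\) and \(\matRh=\matR\) follows. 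Through the evaluation isomorphism \(\mathbb{R}[X]/X(X-1)(X+1)\cong\mathbb{R}^3\), the conditions \(\matC_{23}=0\) and \(\matC_{32}=0\) are precisely \(\widehat P_2\in\spn(P_1,P_2)\) and \(\widehat P_3\in\spn(P_1,P_3)\). I expect the main obstacle to be the bookkeeping of the second paragraph together with the observation \(\operatorname{im}A=\spn(e_2,e_3)\), which is exactly what collapses a two-parameter matrix identity down to the two scalar vanishing conditions.
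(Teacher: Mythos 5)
Your proof is correct and takes essentially the same route as the paper: the identical reductions \(\matTh=\matC\matT\matiC\) and \(\matE\matC=\matE\) collapse \(\matRh=\matR\) to \((\matS\matC-\matC\matS)(\matT\matE\matiT-\matid)=0\) for all \((s,s')\), which is the paper's criterion \((\matS\matC-\matC\matS)\matT(\matE-\matid)=0\) up to the invertible right factor \(\matiT\), and your reading of the hypothesis as \(\matSh=\matS\) (the same diagonal matrix) is exactly what the paper's own computation uses. The only divergence is the finish: where the paper performs the ``straightforward calculation'' of the full \(3\times 3\) product and reads off the common factor \((s-s')\) multiplying \(c_{23}\) and \(c_{32}\), you split \(\matS\) into the two basis diagonals and exploit that the image of \(\matT\matE\matiT-\matid\) is exactly \(\spn(e_2,e_3)\) (since \(\matE\matiT=\matE\), its last two columns are \(-e_2,-e_3\)), so the commutators need only be tested on \(e_2,e_3\) --- an equivalent but slightly cleaner way to extract the same two vanishing conditions.
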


\begin{proof}
    First, we immediately obtain the following relations by identifying the coefficients of \(\matMh\) and of \(\matM\):
    \begin{equation*}
        \widehat{P}_k(c_j) = \sum_{1\leq l\leq 3} \matC_{k,l} P_l(c_j), 
        \qquad 1\leq j\leq 3.
    \end{equation*}
    We deduce that 
    \begin{equation*}
        \matTh = \matC\matT\matiC.
    \end{equation*}
    Moreover, as the second and the third columns of \(\matE\) are zero (the equilibrium values depend only on the first moment \(\rho\)), and as the first line of \(\matC\) is then \((1, 0, 0)\), we have
    \begin{equation*}
        \matE\matC = \matE.
    \end{equation*}
    We have
    \begin{align*}
        \matRh &= \matiMh \matiTh \Bigl(
            \matid + \matSh \bigl(
                \matTh\matEh\matiTh-\matid
            \bigl)
        \Bigr) \matTh \matMh \\
        &= \matiM\matiT\matiC \Bigl(
            \matid + \matS \bigl(
                \matC\matT\matE\matC\matiT\matiC - \matid
            \bigr)
        \Bigr) \matC\matT\matM \\
        &= \matiM\matiT \Bigl(
            \matid + \matiC\matS\matC \bigl(
                \matT\matE\matiT - \matid
            \bigr)
        \Bigr) \matT\matM.
    \end{align*}
    Then
    \begin{align*}
        \matRh - \matR &=
        \matiM\matT \matiC \bigl(
            \matS\matC - \matC\matS
        \bigr) \matT \bigl(
            \matE - \matid
        \bigr) \matM.
    \end{align*}
    As the matrices \(\matM\), \(\matT\), and \(\matC\) are invertible, the condition \(\matRh=\matR\) is equivalent to 
    \((\matS\matC - \matC\matS) \matT (\matE - \matid)=0\). Denoting
    \begin{equation*}
        \matC = \begin{pmatrix}
            1&0&0\\ c_{21}&c_{22}&c_{23} \\ c_{31}&c_{32}&c_{33}
        \end{pmatrix},
    \end{equation*}
    a straightforward calculation yields
    \begin{equation*}
        (\matS\matC - \matC\matS) \matT (\matE - \matid) = (s-s')\begin{pmatrix}
            0&0&0\\
            c_{23}\lambda^2(\alpha-6V)&
            c_{23}6\lambda u&
            -c_{23}\\
            c_{32}\lambda V&-c_{32}&0
        \end{pmatrix}.
    \end{equation*}
    Then the property \(\matRh=\matR\) for all values of \(s\) and \(s'\) is equivalent to \(c_{23}=c_{32}=0\), that ends the proof.
\end{proof}

\subsection{Positivity of the \(\matR\) matrix}

The velocity \(V\) being fixed, we propose to give a full description of the sets
\begin{equation*}
 \Omega^{V, u} = \Bigl\lbrace
 (s, s', \alpha) \in \mathbb{R}^4 \textrm{ such that } \matR \textrm{ is a non-negative matrix}
 \Bigr\rbrace,
 \quad u\in\mathbb{R}.
\end{equation*}
Indeed, the non-negativity of the matrix \(\matR\) imposes that all the distributions \(f_j\), \(1\leq j\leq3\), remain non-negative if they are so at the initial time. These sets are first described by a set of nine inequalities that can be joined into just one. Numerical illustrations are then given to visualize it in the characteristic cases including SRT, MRT, and relative velocity scheme.



\section{Positivity of the iterative matrix}
\label{sec:inequalities}

The nine inequalities obtained from the matrix \(\matR\) can be combined neatly into one formula.

The inequalities are
\begin{equation} \label{Rij-positive} \left \{ 
    \begin{aligned}
    R_{0,0} &= V s u - \tfrac12 V s - V s' u + \tfrac16 \alpha s' + s u - \tfrac12 s - s' u - \tfrac16 s' + 1 &\geqslant 0, \\ 
    R_{0,1} &= V s u - \tfrac12 V s - V s' u + \tfrac16 \alpha s' + \tfrac13 s' &\geqslant 0, \\ 
    R_{0,2} &= V s u - \tfrac12 V s - V s' u + \tfrac16 \alpha s' - s u + \tfrac12 s + s' u - \tfrac16 s' &\geqslant 0, \\ 
    R_{1,0} &= - 2 V s u + 2 V s' u - \tfrac13 \alpha s' - 2 s u + 2 s' u + \tfrac13 s' &\geqslant 0, \\ 
    R_{1,1} &= - 2 V s u + 2 V s' u - \tfrac13 \alpha s' - \tfrac13 2 s' + 1 &\geqslant 0, \\ 
    R_{1,2} &= - 2 V s u + 2 V s' u - \tfrac13 \alpha s' + 2 s u - 2 s' u + \tfrac13 s' &\geqslant 0, \\ 
    R_{2,0} &= V s u + \tfrac12 V s - V s' u + \tfrac16 \alpha s' + s u + \tfrac12 s - s' u - \tfrac16 s' &\geqslant 0, \\ 
    R_{2,1} &= V s u + \tfrac12 V s - V s' u + \tfrac16 \alpha s' + \tfrac13 s' &\geqslant 0, \\ 
    R_{2,2} &= V s u + \tfrac12 V s - V s' u + \tfrac16 \alpha s' - s u - \tfrac12 s + s' u - \tfrac16 s' + 1 &\geqslant 0. 
    \end{aligned} \right.   
\end{equation}
We prove now that the previous nine inequalities can be written in a much more  lucid way.

\begin{proposition}
We introduce the reduced parameters  \(\overline{u}\) and \(\gamma\) according to
\begin{equation} \label{ubar-gamma} 
  \overline{u} = 2 u (s - s') ,
  \quad
  \gamma = \frac{s'}{6} (1-\alpha) - u (s - s')V . 
\end{equation}
Then the nine previous inequalities \(R_{i,j} \geq 0\) displayed in \eqref{Rij-positive}  are equivalent to 
\begin{equation} \label{cns-stabilite}
\max ( s'-1 ,| \overline{u} | )  
\leq 
2 \gamma  
\leq 
\min ( 2 - s - | \overline{u} -   s V | ,
s - | \overline{u} + s V | , s' - | s V | ) .
\end{equation}
\end{proposition}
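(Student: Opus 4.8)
The plan is to show that, after substituting the reduced parameters \eqref{ubar-gamma}, each of the nine quantities $R_{i,j}$ in \eqref{Rij-positive} becomes an affine function of the single combination $2\gamma$, so that every inequality $R_{i,j}\geq 0$ collapses to one one-sided bound on $2\gamma$. Once this is done the equivalence is automatic: a finite system of bounds $2\gamma\geq a_i$ together with $2\gamma\leq b_j$ holds simultaneously iff $\max_i a_i\leq 2\gamma\leq \min_j b_j$, and \eqref{cns-stabilite} is exactly this max/min rewritten with absolute values.

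First I would record the two identities that do the work. From \eqref{ubar-gamma} one has $u(s-s')=\overline{u}/2$, hence $Vu(s-s')=V\overline{u}/2$, and solving the definition of $\gamma$ for the $\alpha$-term gives
\[
\tfrac16\alpha s' = \tfrac{s'}{6}-\gamma-\tfrac12 V\overline{u}.
\]
These two relations eliminate every occurrence of $u$ and of $\alpha$ in \eqref{Rij-positive}; the combination $\gamma$ is engineered precisely so that the $\alpha$-dependence disappears linearly.

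Next I would substitute and simplify each $R_{i,j}$. I expect the middle row to give the cleanest forms, namely $R_{1,0}=2\gamma-\overline{u}$, $R_{1,2}=2\gamma+\overline{u}$, and $R_{1,1}=2\gamma-s'+1$, yielding the three lower bounds $2\gamma\geq\overline{u}$, $2\gamma\geq-\overline{u}$, $2\gamma\geq s'-1$, whose conjunction is $2\gamma\geq\max(s'-1,|\overline{u}|)$. The remaining six entries should each reduce to an upper bound, organised into three symmetric pairs: the centre column $(R_{0,1},R_{2,1})$ giving $2\gamma\leq s'\mp sV$, the anti-diagonal pair $(R_{0,2},R_{2,0})$ giving $2\gamma\leq s\mp(\overline{u}+sV)$, and the diagonal pair $(R_{0,0},R_{2,2})$ giving $2\gamma\leq 2-s\pm(\overline{u}-sV)$. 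Applying the elementary identity $\min(a+X,a-X)=a-|X|$ to each pair produces the three upper bounds $s'-|sV|$, $s-|\overline{u}+sV|$, and $2-s-|\overline{u}-sV|$, whose minimum is the right-hand side of \eqref{cns-stabilite}.

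The main obstacle is clerical rather than conceptual: the nine substitutions must be carried out without a single sign slip, and in particular the printed coefficient $-\tfrac13 2 s'$ in $R_{1,1}$ has to be read as $-\tfrac23 s'$ for the cancellation producing $2\gamma-s'+1$ to occur. The one genuinely clever point, which I would flag explicitly, is that the definitions of $\overline{u}$ and $\gamma$ are not arbitrary: $\overline{u}$ carries the part odd in the velocity index while $\gamma$ is chosen so that each column and each diagonal becomes symmetric about a common centre, which is exactly what lets the paired bounds collapse into absolute values. I would verify the pairing by checking that the two members of each pair differ only by the sign of a single term.
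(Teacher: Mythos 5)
Your proposal is correct and follows essentially the same route as the paper: the paper likewise pairs $(R_{0,0},R_{2,2})$, $(R_{0,1},R_{2,1})$, $(R_{0,2},R_{2,0})$, and $(R_{1,0},R_{1,2})$, collapses each pair into an absolute-value bound on $2\gamma$ (yielding \eqref{provis-01}--\eqref{provis-04}), and treats $R_{1,1}\geq 0$ alone as the lower bound $s'-1\leq 2\gamma$ \eqref{provis-05}. Your explicit affine forms (e.g.\ $R_{1,0}=2\gamma-\overline{u}$, $R_{1,1}=2\gamma-s'+1$) check out, and your reading of the printed coefficient $-\tfrac13 2s'$ as $-\tfrac23 s'$ is the correct one, as confirmed by the unit column sums of $\matR$ required by mass conservation.
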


\begin{proof}
Consider first the two inequalities associated with \(R_{0,0}\) and  \(R_{2,2}\):
\begin{equation*} 
  \left\lbrace
  \begin{aligned} 
  V s u  - V s' u + \tfrac16 \alpha s' - \tfrac12 s - \tfrac16 s' + 1 
  &\geqslant \phantom{-}\tfrac12 V s - s u + s' u ,\\ 
  V s u  - V s' u + \tfrac16 \alpha s' - \tfrac12 s - \tfrac16 s' + 1
  &\geqslant -\tfrac12 V s + s u - s' u .  
  \end{aligned}
  \right.
\end{equation*}
They can be synthetized in the following form
\begin{equation*}
  \tfrac12
  \left\vert
    \overline{u} - V s
  \right\vert
  \leq 1 - \tfrac12 s  - \bigl( 
    \tfrac16 s' (1-\alpha) + u (s'-s) V \bigr)  
  = 1 - \tfrac12 s - \gamma 
\end{equation*}
and we can write this relation as
\begin{equation} \label{provis-01} 
2 \gamma \leq 2 - s  - \vert \overline{u} - s V \vert. 
\end{equation}

Write now the inequalities \eqref{Rij-positive}  associated with \(R_{0,1}\) and  \(R_{2,1}\):
\begin{equation*}
  \left\lbrace
  \begin{aligned}
    V s u - V s' u + \tfrac16 \alpha s' + \tfrac13 s'
    & \geqslant \phantom{-}\tfrac12 V s ,\\ 
    V s u - V s' u + \tfrac16 \alpha s' + \tfrac13 s'
    & \geqslant - \tfrac12 V s. 
  \end{aligned}
  \right.
\end{equation*}
In other words,  \(\vert V s/2 \vert \leq - \gamma +  s'/2 \). Then 
\begin{equation} \label{provis-02} 
2 \gamma \leq s'  - \vert s V \vert. 
\end{equation}

We now focus on the inequalities \eqref{Rij-positive} associated with \(R_{0,2} \) and  \(R_{2,0}\):
\begin{equation*}
  \left\lbrace
  \begin{aligned}
    V s u - V s' u + \tfrac16 \alpha s' + \tfrac12 s - \tfrac16 s'
    & \geqslant \phantom{-}\tfrac12 V s +  s u  - s' u ,\\ 
    V s u - V s' u + \tfrac16 \alpha s' + \tfrac12 s - \tfrac16 s'
    & \geqslant - \tfrac12 V s - s u  + s' u .
  \end{aligned}
  \right.
\end{equation*}
We have \(\vert V s/2 +  s u  - s' u \vert \leq V s u - V s' u + \alpha s'/6 + s/2 - s'/6 = s/2 - \gamma \). In consequence, 
\begin{equation} \label{provis-03} 
2 \gamma \leq s  - \vert \overline{u} + s V \vert . 
\end{equation}

Considering the  inequalities \eqref{Rij-positive} with \(R_{1,0}\) and  \(R_{1,2}\), we have 
\begin{equation*}
  \left\lbrace
  \begin{aligned}
    - V s u +  V s' u - \tfrac16 \alpha s' + \tfrac16 s'
    & \geqslant   \phantom{-} s u -  s' u ,\\ 
    - V s u +  V s' u - \tfrac16 \alpha s' + \tfrac16 s'
    & \geqslant   -  s u +  s' u ,
  \end{aligned}
  \right.
\end{equation*}
and \(\vert\overline{u}/2\vert \leq - V s u +  V s' u - \alpha s'/6 + s'/6 = \gamma\).
In consequence,
\begin{equation} \label{provis-04} 
  \vert\overline{u}\vert \leq 2 \gamma .
\end{equation}

The last inequality \(R_{1,1} \geq 0\) can be written as
\(- V s u + V s' u - \alpha s'/6 - s'/3 + 1/2 \geqslant 0 \)
and this inequality is equivalent to \(\gamma - s'/2 + 1/2 \geqslant 0\). 
In other terms,
\begin{equation} \label{provis-05} 
s' - 1 \leq 2 \gamma .
\end{equation}

The inequalities \eqref{provis-01}, \eqref{provis-02}, and  \eqref{provis-03} establish a triple majoration of \(2\gamma\) whereas the inequalities  \eqref{provis-04} and  \eqref{provis-05} show a double minoration of the same quantity. The proof is completed. 
\end{proof}



\section{The particular case $u=0$}
\label{sec:norelative}

In this section, we suppose that the relative velocity \(u\) is reduced to zero. Then the necessary and sufficient conditions \eqref{cns-stabilite} for the stability can be written as
\begin{equation} \label{cns-stabilite-uzero} 
\max ( s'-1 ,0)  
\leq \frac{s'}{3} (1-\alpha) \leq 
\min ( 2 - s - \vert s V \vert, s - \vert s V \vert, s' - \vert s V \vert ) . 
\end{equation}

\begin{proposition}
To fix the ideas, we suppose  that the advection velocity \(V\) is non-negative:
\begin{equation} \label{V-positive} 
  V \geq 0 .
\end{equation}
The case \(V \leq 0 \) follows directly. 
When \(u = 0\), the  reduced stability conditions
\begin{equation} \label{reduced-cns-stabilite-uzero} 
  \max ( s'-1 ,0)  
  \leq
  \min ( 2 - s - \vert s V \vert, s - \vert s V \vert, s' - \vert s V \vert ) . 
  \end{equation}
are equivalent to the following conditions for the relaxation parameters 
\begin{equation} \label{cns-stabilite-uzero-explicites} 
  \left\lbrace
  \begin{aligned}
    &0  \leq s , s' \leq 2, \\ 
    &s' \geq s V, \\ 
    &s  \leq 2/(1+V),\\ 
    &s' \leq \min ( 3 - (1+V) s, 1 + (1-V) s ) 
  \end{aligned}
  \right.
\end{equation}
joined with a natural Courant type condition for explicit schemes on the advection velocity
\begin{equation} \label{V-leq-1_u-0} 
  V \leq 1 .
\end{equation}
\end{proposition}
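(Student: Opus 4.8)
The plan is to exploit the fact that, for $u=0$, the reduced parameter $\overline{u}=2u(s-s')$ vanishes, so that \eqref{cns-stabilite} collapses to \eqref{cns-stabilite-uzero} and its purely $(s,s',V)$-dependent solvability condition is exactly \eqref{reduced-cns-stabilite-uzero}. The whole argument then rests on one elementary observation: a two-sided inequality of the form $\max(a,b)\le\min(c,d,e)$ holds if and only if each of the six scalar inequalities $a\le c$, $a\le d$, $a\le e$, $b\le c$, $b\le d$, $b\le e$ holds. Applying this to \eqref{reduced-cns-stabilite-uzero} with $a=s'-1$, $b=0$, $c=2-s-|sV|$, $d=s-|sV|$, $e=s'-|sV|$ turns the statement into a finite list of affine inequalities that I can then compare term by term with \eqref{cns-stabilite-uzero-explicites}.

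First I would dispose of the absolute value. Since $V\ge0$ we have $|sV|=|s|V$, and the inequality $0\le d=s-|sV|$ coming from $b\le d$ forces $s\ge|s|V$; a sign test shows $s<0$ is impossible, since it would require $s(1+V)\ge0$ with $1+V>0$. Hence $s\ge0$ and henceforth $|sV|=sV$. With this the six inequalities become affine in $s,s'$ and involve only the combinations $s(1+V)$, $s(1-V)$ and $sV$: the inequalities $b\le d$ and $b\le e$ read $0\le s(1-V)$ and $0\le s'-sV$; the inequality $a\le e$ reads $s'-1\le s'-sV$, that is $sV\le1$; the inequality $b\le c$ reads $s(1+V)\le2$, that is $s\le2/(1+V)$; and the inequalities $a\le c$, $a\le d$ read $s'\le3-(1+V)s$ and $s'\le1+(1-V)s$.

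Reading off the correspondence then gives the result. The conditions $s'\ge sV$, $s\le2/(1+V)$ and $s'\le\min(3-(1+V)s,\,1+(1-V)s)$ match \eqref{cns-stabilite-uzero-explicites} directly, while the box constraints follow by combination: $s\le2/(1+V)\le2$ because $V\ge0$; summing the two upper bounds on $s'$ gives $2s'\le4-2Vs$, whence $s'\le2-Vs\le2$; and $s'\ge sV\ge0$. The residual inequality $sV\le1$ is not independent, since from $s\le2/(1+V)$ and $V\le1$ one gets $sV\le 2V/(1+V)\le1$, the last step being equivalent to $V\le1$. Finally the Courant condition $V\le1$ is produced by the momentum-relaxation inequality $0\le s(1-V)$: in the nondegenerate regime $s>0$ this is exactly $V\le1$. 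The converse direction is then a direct verification that \eqref{cns-stabilite-uzero-explicites} together with $V\le1$ imply each of the six inequalities, in particular $s(1-V)\ge0$ from $s\ge0$ and $V\le1$, and $sV\le2V/(1+V)\le1$ as above.

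The main delicate point, and the one I would treat with care, is the handling of the term $|sV|$: everything downstream depends on first pinning the sign of $s$ from a single one of the six inequalities, after which the absolute values disappear and the algebra is routine bookkeeping. A second, subtler point is that the Courant condition $V\le1$ follows from \eqref{reduced-cns-stabilite-uzero} only through the inequality $s(1-V)\ge0$, which degenerates when $s=0$; I would therefore state the equivalence under the natural nontriviality assumption $s>0$ on the momentum relaxation, or equivalently note that $V\le1$ is forced exactly when the relaxation is genuine, since for $s=0$ the reduced conditions collapse to $0\le s'\le1$ and constrain $V$ no further.
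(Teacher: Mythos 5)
Your proof is correct and follows essentially the same route as the paper: split the two-sided max/min inequality \eqref{reduced-cns-stabilite-uzero} into the six pairwise scalar inequalities, pin down \(s\geq 0\) first so that \(\vert sV\vert = sV\), match the resulting affine inequalities with \eqref{cns-stabilite-uzero-explicites}, and check the converse by direct verification. You are in fact more careful than the paper at two points: your averaging argument \(2s'\leq 4-2sV\), hence \(s'\leq 2-sV\leq 2\), is a valid derivation of the bound \(s'\leq 2\) (the paper's justification ``\(s'\leq 3-(1+V)s\leq 2\) because \(V\geq 0\)'' is incorrect as literally written, since \(3-(1+V)s\) can exceed \(2\) for small \(s\)), and you rightly flag that the Courant condition \(V\leq 1\) only follows from \(s(1-V)\geq 0\) when \(s>0\), a degeneracy the paper's step ``\(sV\leq s\) hence \(V\leq 1\)'' silently ignores (indeed for \(s=0\) the reduced conditions collapse to \(0\leq s'\leq 1\) and impose no restriction on \(V\), so the stated equivalence needs exactly the nontriviality caveat you give).
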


Of course,  the conditions \eqref{cns-stabilite-uzero} have still to be imposed for the equilibrium parameter \(\alpha\) when the pair \(s,s'\) is given. In particular,
\begin{equation} \label{alpha-leq-1} 
  \alpha \leq 1
\end{equation}
and 
\begin{equation} \label{sple3salp2} 
 s' \leq \frac{3}{\alpha + 2 } .  
\end{equation}

\begin{figure}
  \includegraphics[width=.32 \textwidth, angle=0]{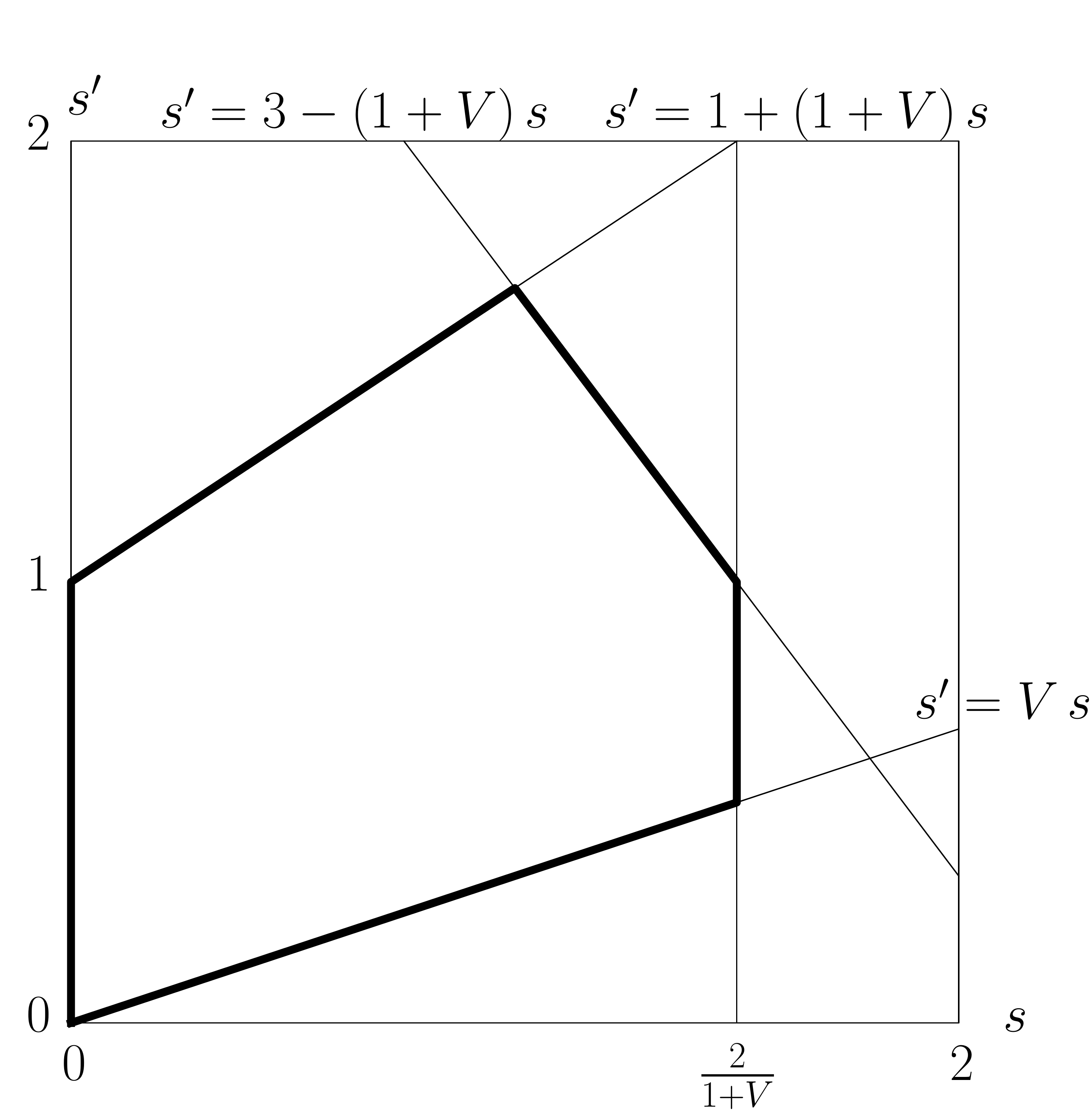}
  \caption{Necessary and sufficient stability regions described by the inequalities \eqref{reduced-cns-stabilite-uzero} for
  a null relative velocity \(u\). Illustration proposed for  \(V = 2/3\). }
  \label{cns-uzero-fig}
\end{figure}

\begin{proof}
We first observe that 
\(0 \leq \max ( s'-1 , 0) \leq \vert s V \vert \leq \min  (2-s , s) \).
Then \(0 \leq s \leq 2\).
Secondly, we have \(\vert s V \vert \leq s' \) and because both \(s\) and \(V\) are positive, we have \(0 \leq  s V \leq s'\). We have also \(s V \leq  s \) and \eqref{V-leq-1_u-0} is established.

Moreover, \(s \geq 0 \) and  \(\vert s V \vert \leq 2-s \) implies \(s \leq 2/(1+V)\). 
Due to the positivity of the parameter \(s\), we deduce from \eqref{cns-stabilite-uzero} a new set of inequalities: 
\(s' - 1 \leq \max ( s'-1 ,0) \leq \min ( 2-s - s V , s - s V )\).
Then \(s'\leq\min(3-(1+V)s, 1 + (1-V) s\).
Moreover, \(s'\leq 3- (1+V) s \leq 2 \) because \(V \geq 0\). 

Conversely, if the relations \eqref{cns-stabilite-uzero-explicites} and \eqref{V-leq-1_u-0} are satisfied, we have \(s'-1 \leq 2 - s - s V\),  \(s'-1 \leq s - s V \) and \(s'-1 \leq s' - s V \). Then \(s'-1 \leq \min (2 - s - s V , s - s V ,s' - s V \). Moreover, \(0 \leq 2 - s - s V\), \(0 \leq s - s V \) because \(V \leq 1 \) and \(0 \leq s' - s V\). Thus \(0 \leq\min(2 - s - s V ,s - sV ,s' - s V\).

Finally the inequalities \eqref{reduced-cns-stabilite-uzero} are established and the proposition is proven.
\end{proof}



\section{The general case}
\label{sec:relative}

We analyse the general case of non-zero $u$ in this section, with suitable illustrations of the stability region for various ranges of the parameters.

\subsection{Necessary conditions for stability}
\label{sec:cn}

In this subsection, we prove the following proposition. 

\begin{proposition}\label{th:necessary}
We suppose that the necessary and sufficient stability conditions \eqref{cns-stabilite} are satisfied:
\begin{equation*} 
  \max ( s'-1 ,| \overline{u} | )  
  \leq 
  2 \gamma  
  \leq 
  \min ( 2 - s - | \overline{u} -   s V | ,
  s - | \overline{u} + s V | , s' - | s V | )
\end{equation*}
with the notations \eqref{ubar-gamma}:
\begin{equation*}
  \overline{u} = 2 u (s - s') ,
  \quad
  \gamma = \frac{s'}{6} (1-\alpha) - u (s - s')V . 
\end{equation*}
We suppose also that the advection velicity \(=V\) is positive. 
Then if the scheme is stable, the point \((s, s')\) satisfies the following inequalities
\begin{equation} \label{necessary} 
  \left \lbrace 
  \begin{aligned}
    &0 \leq s V \leq s' \leq 2 \\
    &0 \leq s V \leq 1 \\
    &0 \leq s \leq 2 \\
    &s' \leq \min ( 2 - sV , s + 1 , 3-s ) \\
    &s  \leq 2/(1+V)  . 
  \end{aligned}
  \right.
\end{equation}
With these necessary stability conditions, the parameter \(\overline{u}\) has been eliminated. We have also, necessarily 
\begin{equation} \label{V-leq-1}   
  V \leq  1
\end{equation} 
and
\begin{equation} \label{abs-ubar} 
  \vert \overline{u} \vert \leq \frac{1}{2} .
\end{equation}
\end{proposition}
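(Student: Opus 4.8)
The plan is to start from the necessary-and-sufficient conditions \eqref{cns-stabilite} and extract the stated necessary inequalities \eqref{necessary} by making clever choices of the absolute-value terms, exactly as in the $u=0$ case but now keeping track of $\overline{u}$. Since \eqref{cns-stabilite} requires that $2\gamma$ be squeezed between a maximum and a minimum, every individual lower bound is at most every individual upper bound. I would write out these cross-inequalities: from $|\overline{u}| \leq 2\gamma \leq s' - |sV|$ we get $|\overline{u}| + |sV| \leq s'$, which since $V\geq0$ and (as I will show) $s\geq0$ gives $sV \leq s'$; and since $s'-1 \leq 2\gamma \leq s'-|sV|$ forces $|sV|\leq 1$, hence $0\leq sV\leq 1$. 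The bound $s'\leq 2$ should follow from combining $s'-1\leq 2\gamma$ with one of the upper bounds (e.g. $2\gamma \leq 2-s-|\overline{u}-sV|\leq 2-s$, giving $s'\leq 3-s\leq$ something, though getting exactly $2$ needs $s\geq1$ care — more likely $s'\leq2$ comes from $s'-1\leq s-|sV|\leq s \leq$\,bounds or from $2\gamma\le s'-|sV|$ paired with $0\le 2\gamma$).

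The core technique is that $|\overline{u}|\le 2\gamma$ lets me drop $\overline{u}$ favorably. For the three-part upper bound on $s'$ I would argue: from $s'-1\le 2\gamma \le 2-s-|\overline{u}-sV|\le 2-s+|\overline{u}|+sV$... but I want $s'\le 2-sV$, so instead I should use $s'-1\le 2\gamma$ together with the bound $2\gamma \le s'-|sV|$ only gives a tautology, so the real work is pairing $s'-1 \le 2\gamma \le \min(\cdots)$ with the right member. To get $s' \le 2-sV$ I would combine $s'-1\le 2\gamma\le 2-s-|\overline{u}-sV|$ and separately use $|\overline{u}|\le 2\gamma$ to control the cross term; the identity $-|\overline{u}-sV|\le |\overline{u}|-sV$ is false in general, so the delicate point is choosing signs. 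I expect that writing $2\gamma \le 2-s-|\overline{u}-sV|$ and $2\gamma\ge|\overline{u}|$ and adding/subtracting appropriately, using $|\overline{u}-sV|\ge sV-|\overline{u}|$, yields $2\gamma \le 2-s-sV+|\overline{u}|\le 2-s-sV+2\gamma$, i.e.\ $s\le 2-sV$ — a consistency check rather than the $s'$ bound. The bounds $s'\le s+1$ and $s'\le 3-s$ should emerge from $s'-1\le 2\gamma\le s-|\overline{u}+sV|\le s$ and $s'-1\le 2\gamma\le 2-s-|\overline{u}-sV|\le 2-s$ respectively, which are clean.

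For $s\le 2/(1+V)$ I would reuse the $u=0$ argument: from $|\overline{u}|\le 2\gamma\le 2-s-|\overline{u}-sV|$ and $|\overline{u}|\le 2\gamma\le s-|\overline{u}+sV|$, adding the two upper bounds against $2|\overline{u}|\le$ their sum gives a relation in which $\overline{u}$ cancels and $sV$ survives, producing $2sV\le 2-s-s$ type estimate leading to $s(1+V)\le 2$. The Courant condition \eqref{V-leq-1} then follows from $sV\le s'$ and $s'\le s+1$ combined with $sV\le 1$, or directly from $sV\le\min(\text{upper bounds})$; I expect $V\le1$ to drop out of $sV\le s$ once $s>0$ is known, paralleling the $u=0$ proof.

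The last and hardest step is \eqref{abs-ubar}, the bound $|\overline{u}|\le\frac12$. The earlier inequalities only gave $|\overline{u}|\le 2\gamma$ and the various upper bounds on $2\gamma$, so I would chain these: $|\overline{u}|\le 2\gamma\le \min(s-|\overline{u}+sV|,\,2-s-|\overline{u}-sV|)$. The key move is that both right-hand members themselves contain $|\overline{u}\pm sV|\ge|\overline{u}|-sV$, giving $|\overline{u}|\le s-|\overline{u}|+sV$ and $|\overline{u}|\le 2-s-|\overline{u}|+sV$, hence $2|\overline{u}|\le s+sV$ and $2|\overline{u}|\le 2-s+sV$; adding these two yields $4|\overline{u}|\le 2+2sV$, so $|\overline{u}|\le\frac{1+sV}{2}$, which with $sV\le 1$ only gives $|\overline{u}|\le1$. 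To sharpen to $\frac12$ I must instead use $|\overline{u}+sV|\ge sV-|\overline{u}|$ and $|\overline{u}-sV|\ge sV-|\overline{u}|$ more carefully together with $sV\le s'$ and the $s'$ bounds; I anticipate the decisive inequality is $2|\overline{u}|\le s+sV$ combined with $s(1+V)\le 2$, giving $s+sV\le 2$ hence $2|\overline{u}|\le sV+s \le 2$... so the factor of two is exactly where the main obstacle lies, and obtaining the sharp $\tfrac12$ will require using the lower bound $s'-1\le2\gamma$ jointly with $sV\le s'$ rather than the crude estimates above. This tightening — reconciling all bounds to force $|\overline{u}|\le\frac12$ rather than the weaker $|\overline{u}|\le1$ — is the step I expect to demand the most care.
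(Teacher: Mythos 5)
Your cross-comparison strategy is the paper's strategy, and several pieces go through exactly as in the paper's proof: \(sV\le s'\) from \(\vert\overline{u}\vert\le 2\gamma\le s'-\vert sV\vert\), the neat observation \(\vert sV\vert\le 1\) from \(s'-1\le s'-\vert sV\vert\), the bounds \(s'\le s+1\) and \(s'\le 3-s\), \(0\le s\le 2\), and \(V\le 1\) from \(sV\le s\) (granting \(s>0\), an implicit step the paper also glosses). Note also that the computation you dismiss as ``a consistency check'' — \(2\gamma\le 2-s-\vert\overline{u}-sV\vert\le 2-s-sV+\vert\overline{u}\vert\le 2-s-sV+2\gamma\), hence \(s+sV\le 2\) — is in fact a valid proof of the Courant bound \(s\le 2/(1+V)\); your later rederivation of it (``\(2sV\le 2-s-s\)'') is garbled, but the first one stands. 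However, there are two genuine gaps, which you candidly flag yourself: you never obtain \(s'\le 2-sV\) (the remaining member of the fifth line of \eqref{necessary}), and for \eqref{abs-ubar} you only reach \(\vert\overline{u}\vert\le 1\), not \(\vert\overline{u}\vert\le\tfrac12\). Your guess that the sharpening requires pairing \(s'-1\le 2\gamma\) with \(sV\le s'\) is wrong; \(s'\) plays no role in the bound on \(\overline{u}\).

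The single missing idea, which closes both gaps at once and is exactly what the paper uses, is to combine the \emph{pair} of constraints rather than weaken each absolute value separately, exploiting \(\vert a-b\vert+\vert a+b\vert\ge 2\max(\vert a\vert,\vert b\vert)\). From \eqref{cns-stabilite}, for \(X\in\lbrace \vert\overline{u}\vert,\,s'-1\rbrace\) one has \(X+\vert\overline{u}-sV\vert\le 2-s\) and \(X+\vert\overline{u}+sV\vert\le s\); averaging the two makes \(s\) cancel:
\begin{equation*}
X+\tfrac12\bigl(\vert\overline{u}-sV\vert+\vert\overline{u}+sV\vert\bigr)\le 1,
\qquad\text{hence}\qquad
X+\max\bigl(\vert\overline{u}\vert,\,sV\bigr)\le 1 .
\end{equation*}
Taking \(X=\vert\overline{u}\vert\) gives \(2\vert\overline{u}\vert\le 1\), i.e.\ \eqref{abs-ubar}; taking \(X=s'-1\) gives \(s'-1+sV\le 1\), i.e.\ \(s'\le 2-sV\) (and then \(s'\le 2\) since \(sV\ge 0\), or independently by adding \(s'\le s+1\) to \(s'\le 3-s\)). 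Your attempts fail precisely because lower-bounding each term by \(\vert\overline{u}\vert-sV\) or \(sV-\vert\overline{u}\vert\) discards the opposition of the shifts \(\pm sV\), and that loss is exactly the factor of two you could not recover. One side remark: your claim that \(-\vert\overline{u}-sV\vert\le\vert\overline{u}\vert-sV\) is ``false in general'' is mistaken — for \(sV\ge 0\) it is just the reverse triangle inequality — though relying on it alone is indeed the crude move that blocks the sharp constants.
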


\begin{proof}
We start from the inequalities \eqref{cns-stabilite}. Then we have
\(0 \leq \vert\overline{u}\vert \leq \max ( s'-1 ,\vert\overline{u}\vert) \leq \min ( 2 - s - \vert\overline{u} - sV\vert , s - \vert\overline{u} + sV\vert, s' - \vert sV\vert) \leq s' - \vert sV\vert\)
and \(s' \geq\vert sV\vert \geq sV\).
With the same type of inequalities,
\(\vert\overline{u}\vert + \vert\overline{u} - sV\vert \leq 2 - s\) and \(\vert \overline{u}\vert + \vert\overline{u} + sV\vert \leq s\).

If we remark that
\(\vert\overline{u}\vert\leq\vert\overline{u} - sV\vert/2 +  \vert\overline{u} + sV\vert/2\), we deduce that \(\vert\overline{u}\vert\leq(2 - s + s )/2 = 1 \) and the inequality \eqref{abs-ubar} is proven. 

We have the triangular inequality  \(\vert sV\vert \leq \vert\overline{u} - sV\vert + \vert\overline{u}\vert\). Then from the genaral stability conditions \eqref{cns-stabilite}, we deduce \(\vert\overline{u} - sV\vert + \vert\overline{u}\vert\leq 2 - s\) and \(\vert sV\vert \leq 2 - s\).
In a similar way, 
\(\vert sV \vert \leq \vert \overline{u} + sV \vert + \vert \overline{u}\vert \), \(\vert \overline{u} + sV \vert + \vert \overline{u} \vert \leq s \) from   \eqref{cns-stabilite} and finally \(\vert sV\vert \leq 2 - s\).
We put together the two inequalities and we have \(0 \leq \vert sV \vert \leq \min ( s,2-s)\).

In consequence, we have \(0\leq s \leq 2\) and the third  point is proven. Since we made the choice of  \(V \geq 0\) then \(s \geq 0\). The first inequality of the two first points are established. From \(sV \leq s \) we have \(V \leq 1\) and the relation \eqref{V-leq-1} is true. Moreover,  \(sV \leq 2-s \) and the last inequality of \eqref{necessary} is true. 

Consider now the inequalities
\(s'-1  \leq 2\gamma \leq \min ( 2 - s - \vert \overline{u} - sV \vert ,
s - \vert \overline{u} + sV \vert ) \). We deduce \(s'-1 + \vert \overline{u} - sV\vert \leq 2-s \) and \(s'-1 + \vert \overline{u} + sV \vert \leq s \). Due to the positvity of the absolute values, we have also \(s'\leq 3 - s \) and  \(s'\leq s+1 \). A part of the fifth inequality of \eqref{necessary} is proven.

Finally, due to the triangular inequality, \(sV = \vert sV \vert \leq \vert \overline{u} - sV \vert/2 + \vert \overline{u} + sV \vert/2 \). We add this inequality with the two following ones: \(s'-1 + \vert \overline{u} - sV \vert \leq 2-s \) and \(s'-1 + \vert \overline{u} + sV \vert \leq s \). Then \(s'-1 + sV \leq (2-s + s)/2 = 1 \) and \(s' \leq 2 - sV \). The fifth inequality of \eqref{necessary} is completely established. Because \(sV \geq 0 \), we have also \(s' \leq2 \) and the first inequality of \eqref{necessary} is also established. 
\end{proof}

We illustrate the zones of necessary stability in the Figure~\ref{cn-fig} for
five particular velocities: \(V \in\lbrace 0, 1/4, 1/3, 1/2, 2/3, 1\rbrace\). 

\begin{figure}[H]
  \includegraphics[width=.49 \textwidth, angle=0] {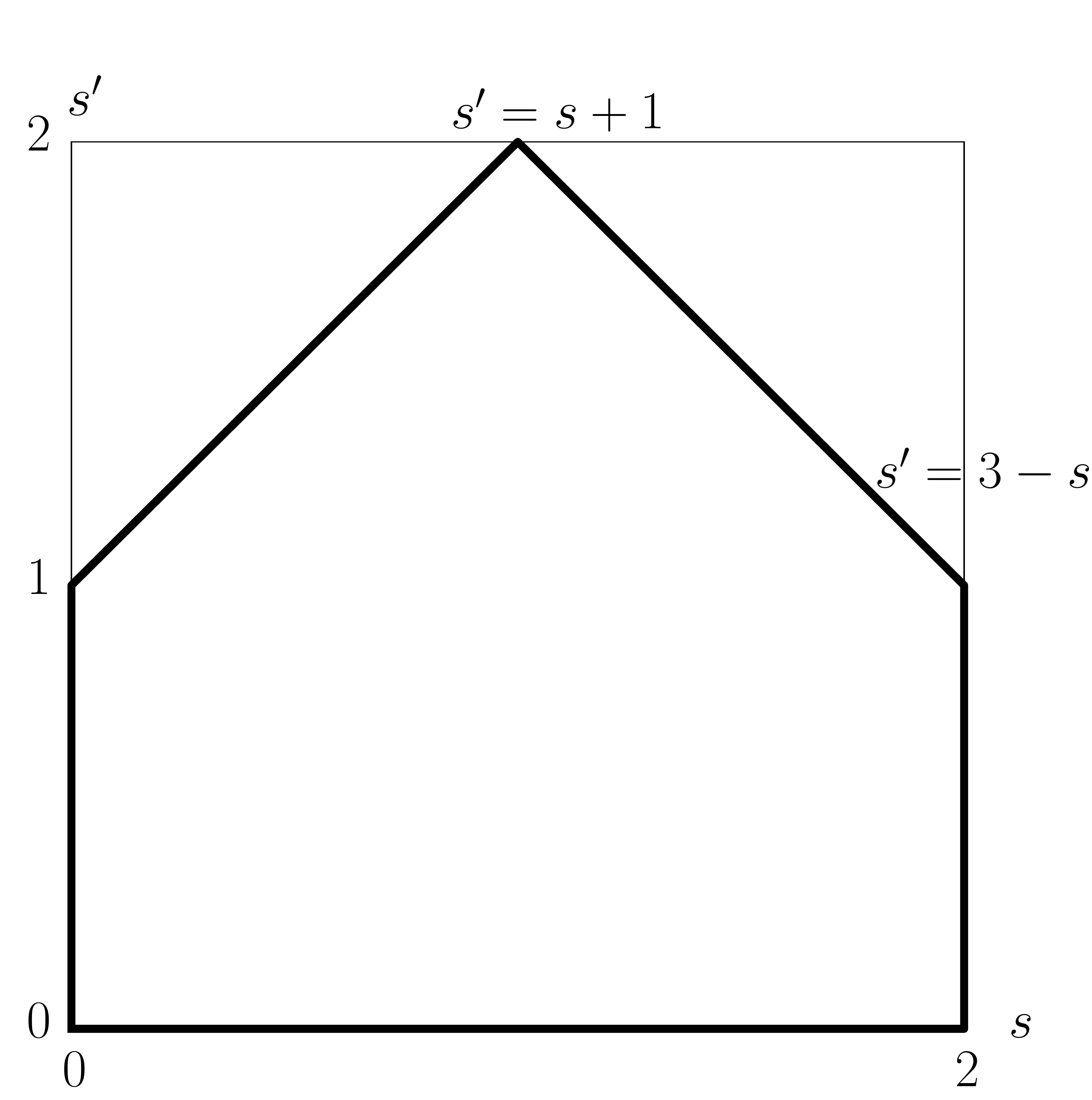}
  \hfill 
  \includegraphics[width=.49 \textwidth, angle=0] {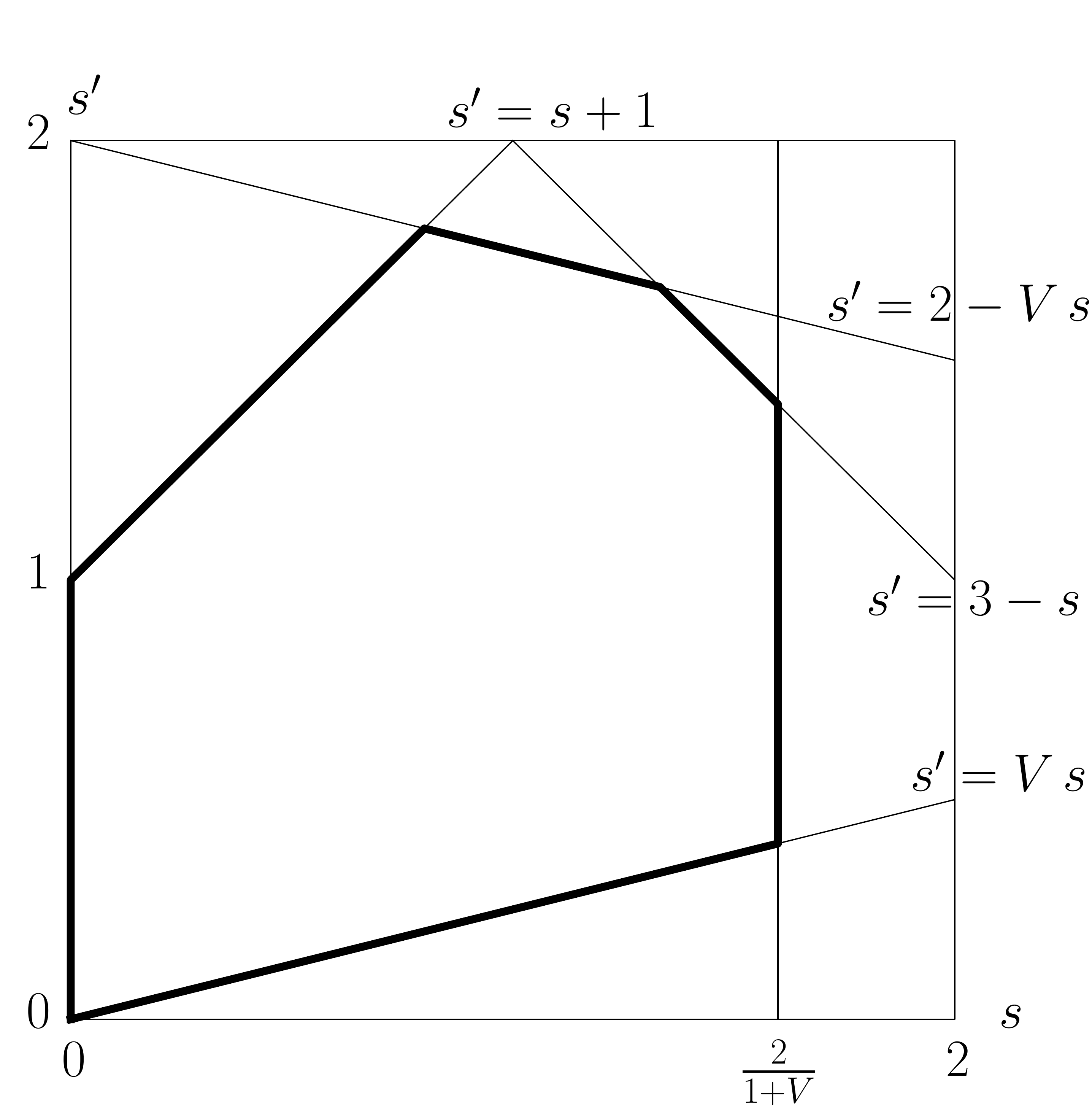}
  \includegraphics[width=.49 \textwidth, angle=0] {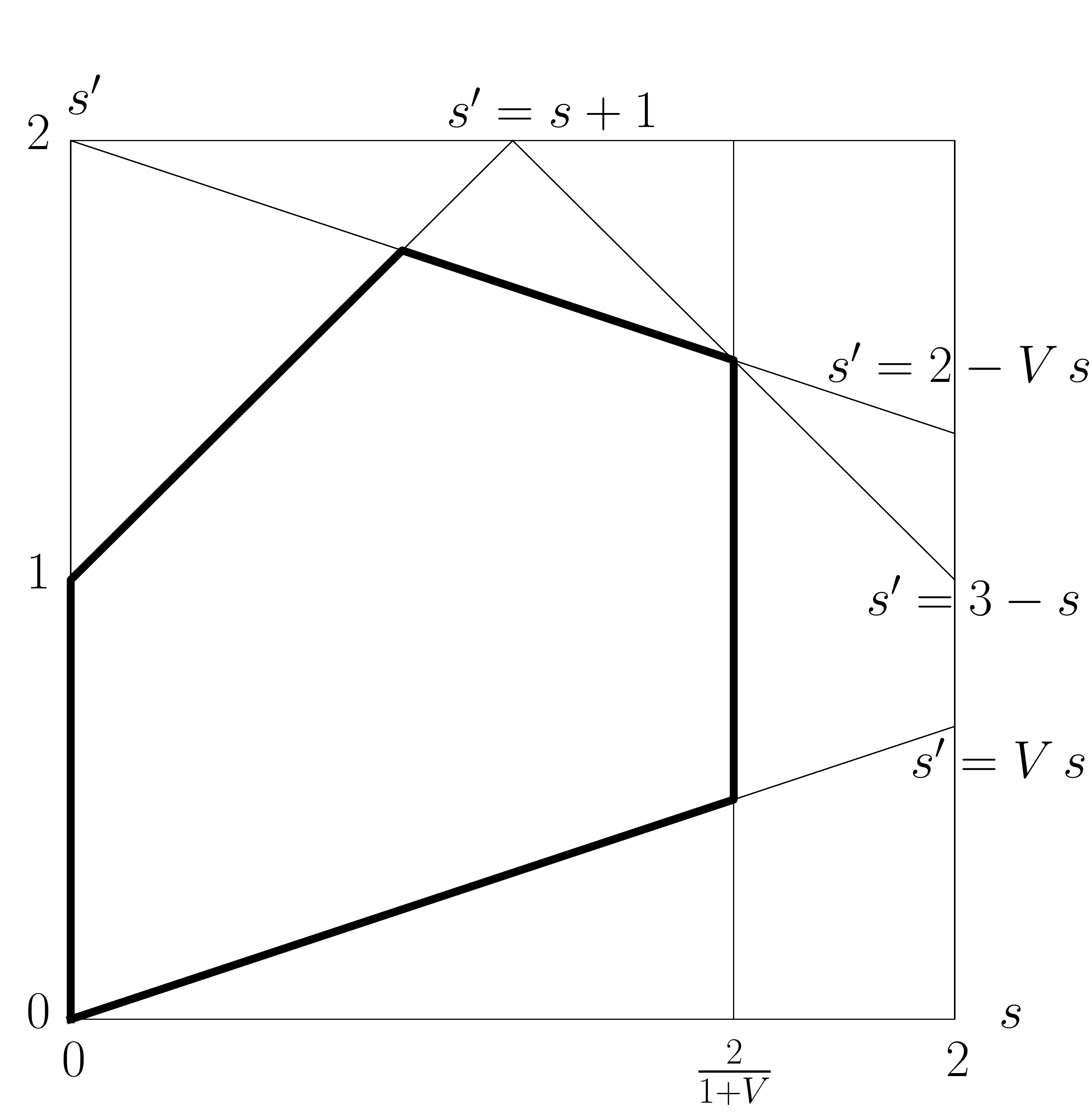}
  \hfill 
  \includegraphics[width=.49 \textwidth, angle=0] {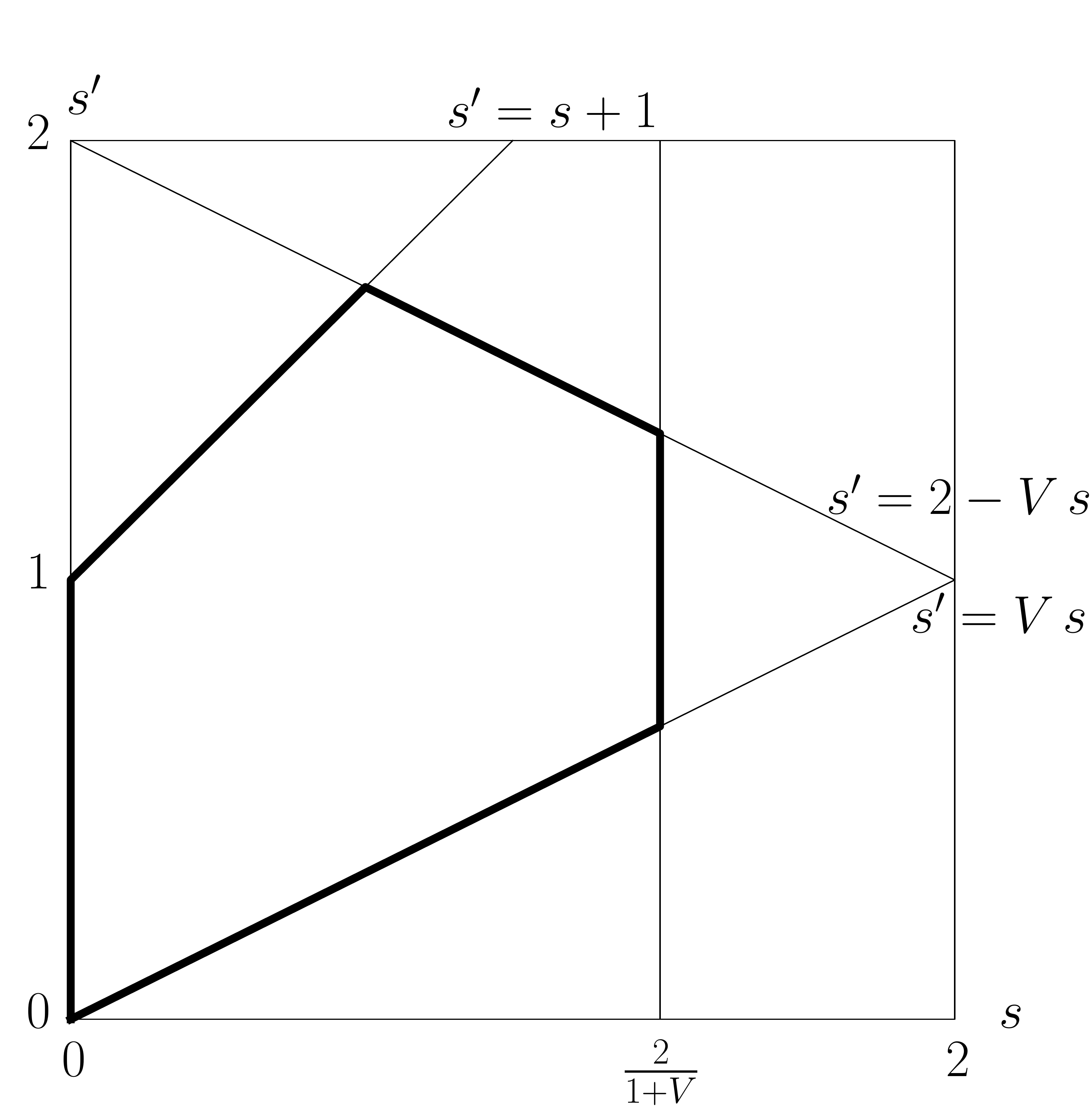}
  \includegraphics[width=.49 \textwidth, angle=0] {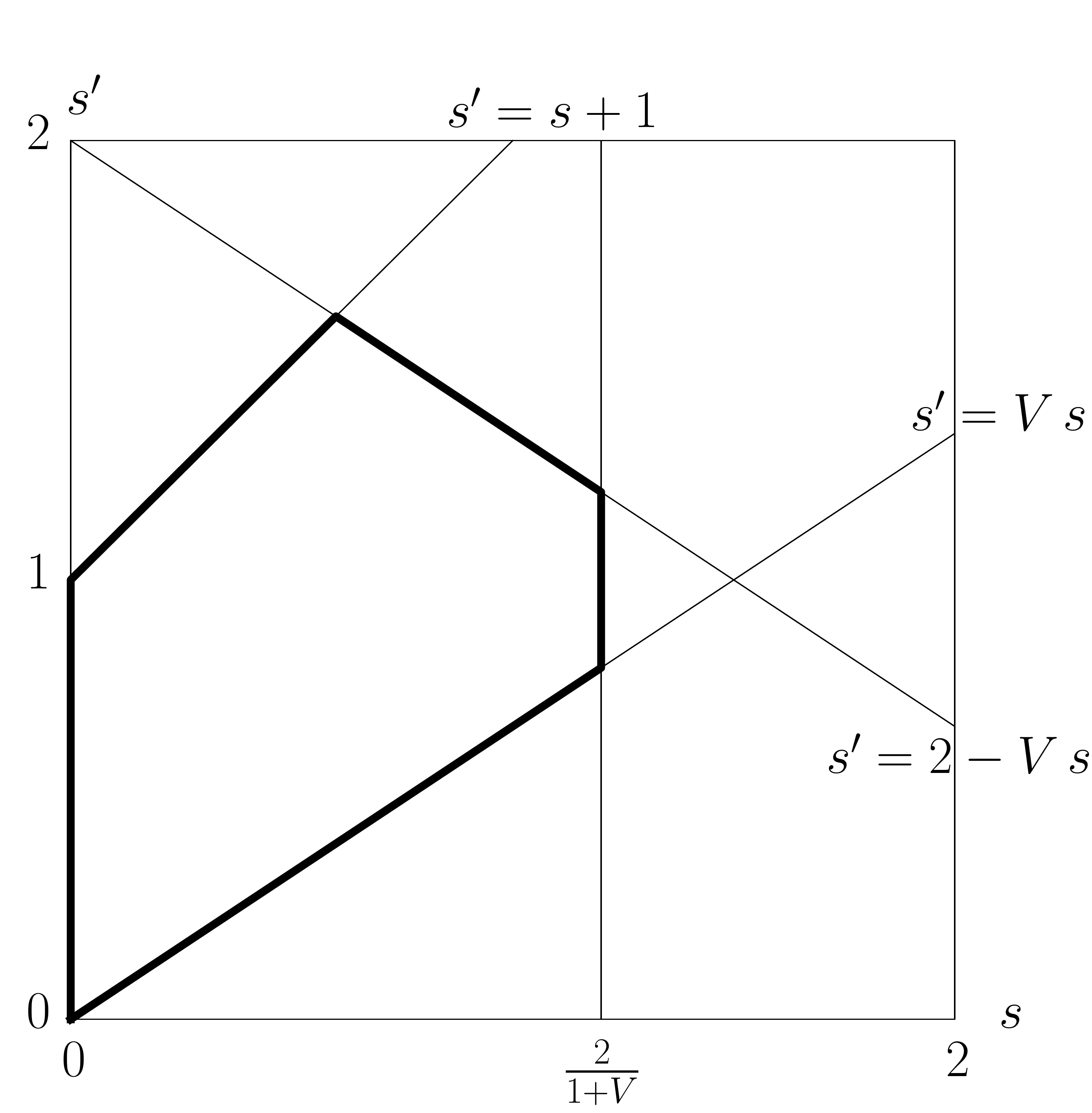}
  \hfill 
  \includegraphics[width=.49 \textwidth, angle=0] {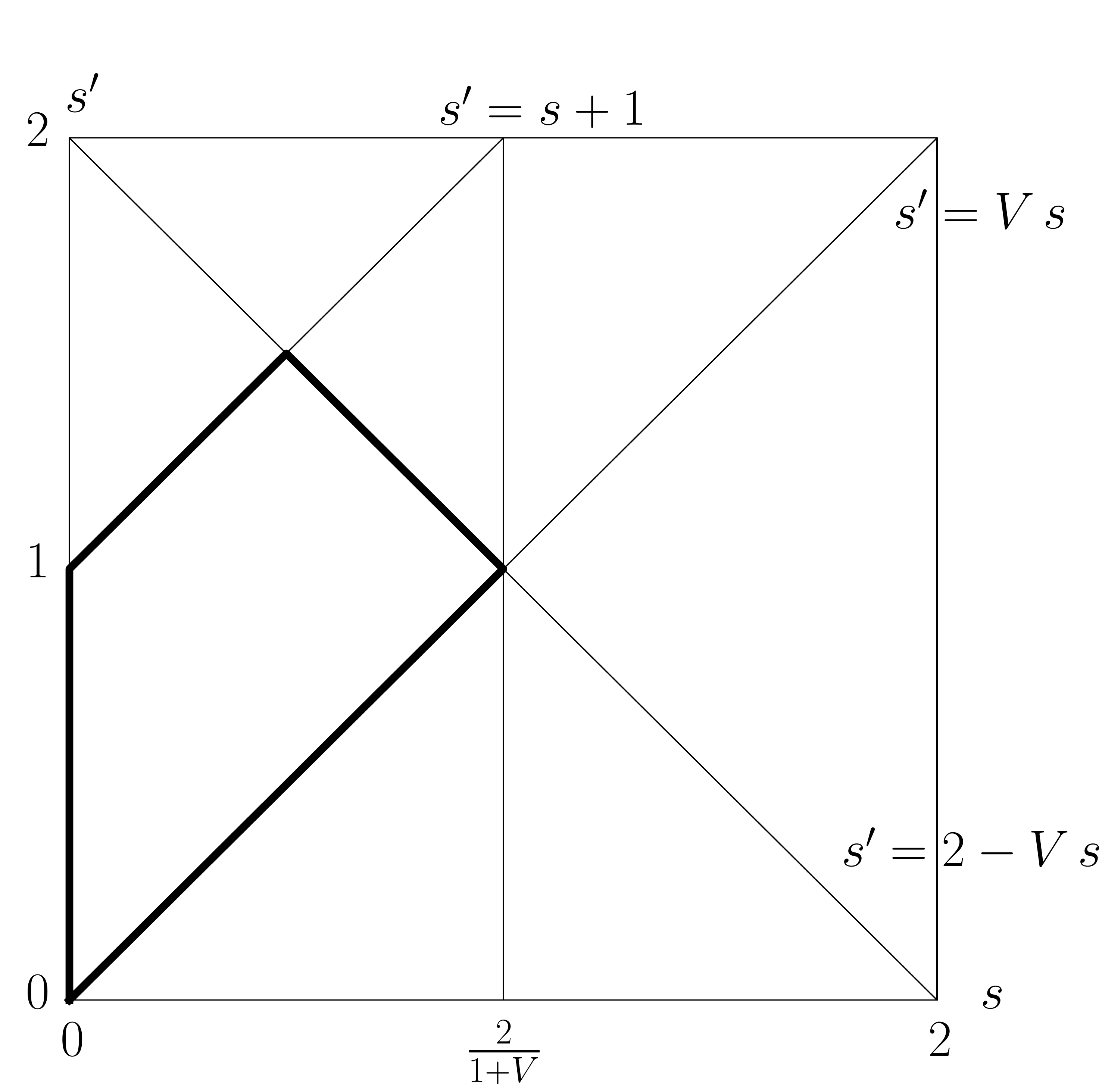}
  \caption{Necessary stability regions given by inequalities \eqref{necessary} for \(V=0\) (top-left), \(V=1/4\) (top-right), \(V=1/3\) (middle-left), \(V=1/2\) (middle-right), \(V=2/3\) (bottom-left), \(V=1\) (bottom-right).}
  \label{cn-fig} 
\end{figure}

\subsection{Numerical study of necessary and sufficient conditions for stability}

We now illustrate the necessary and sufficient stability regions for \(V=0\), \(V=1/4\), \(V=1/3\), \(V=1/2\), \(V=2/3\), and $V=1$ for various ranges of $u$ in the Figure~\ref{cns-numeric-fig}.

\begin{figure}[H]
  \includegraphics[width=.45 \textwidth, angle=0] {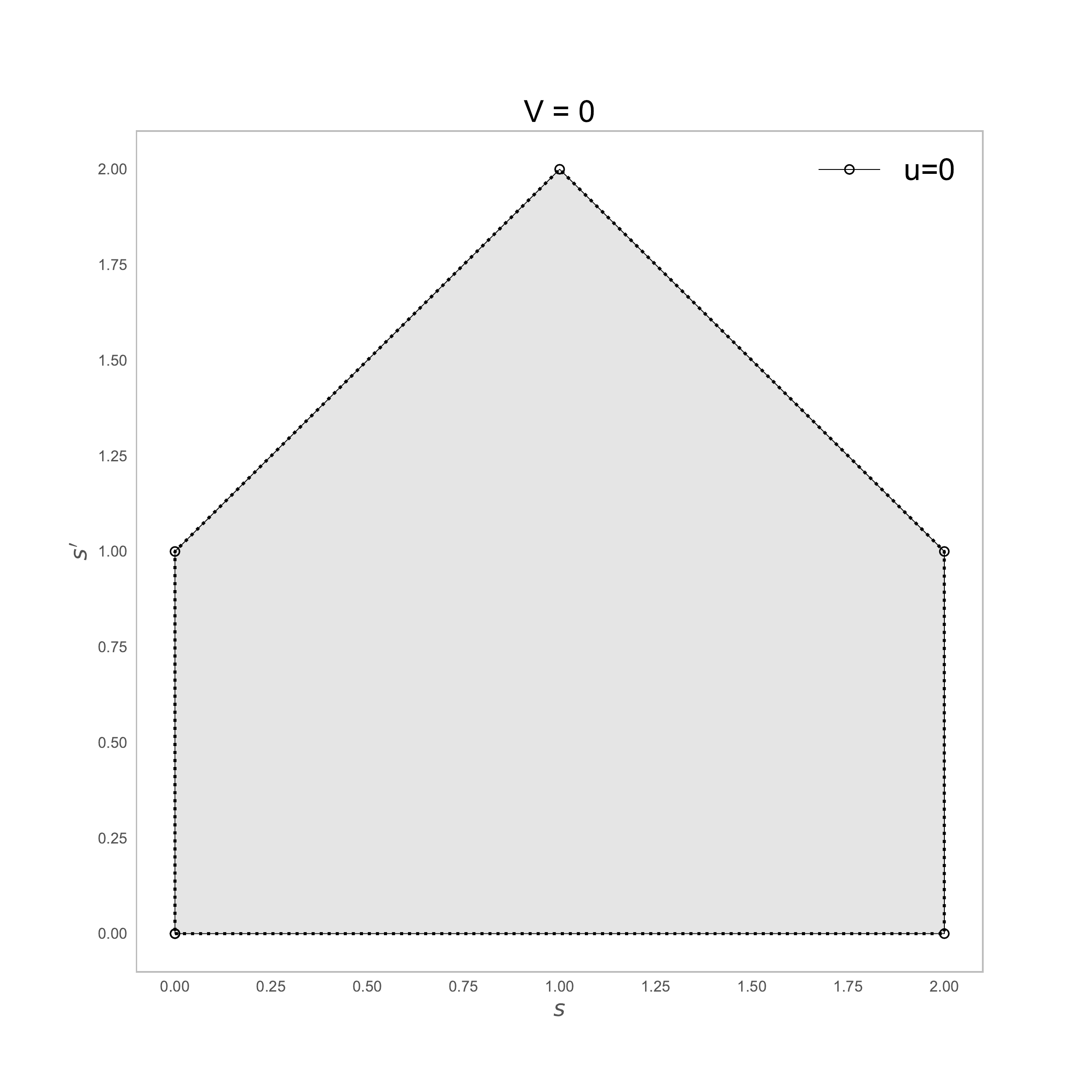}
  \hfill
  \includegraphics[width=.45 \textwidth, angle=0] {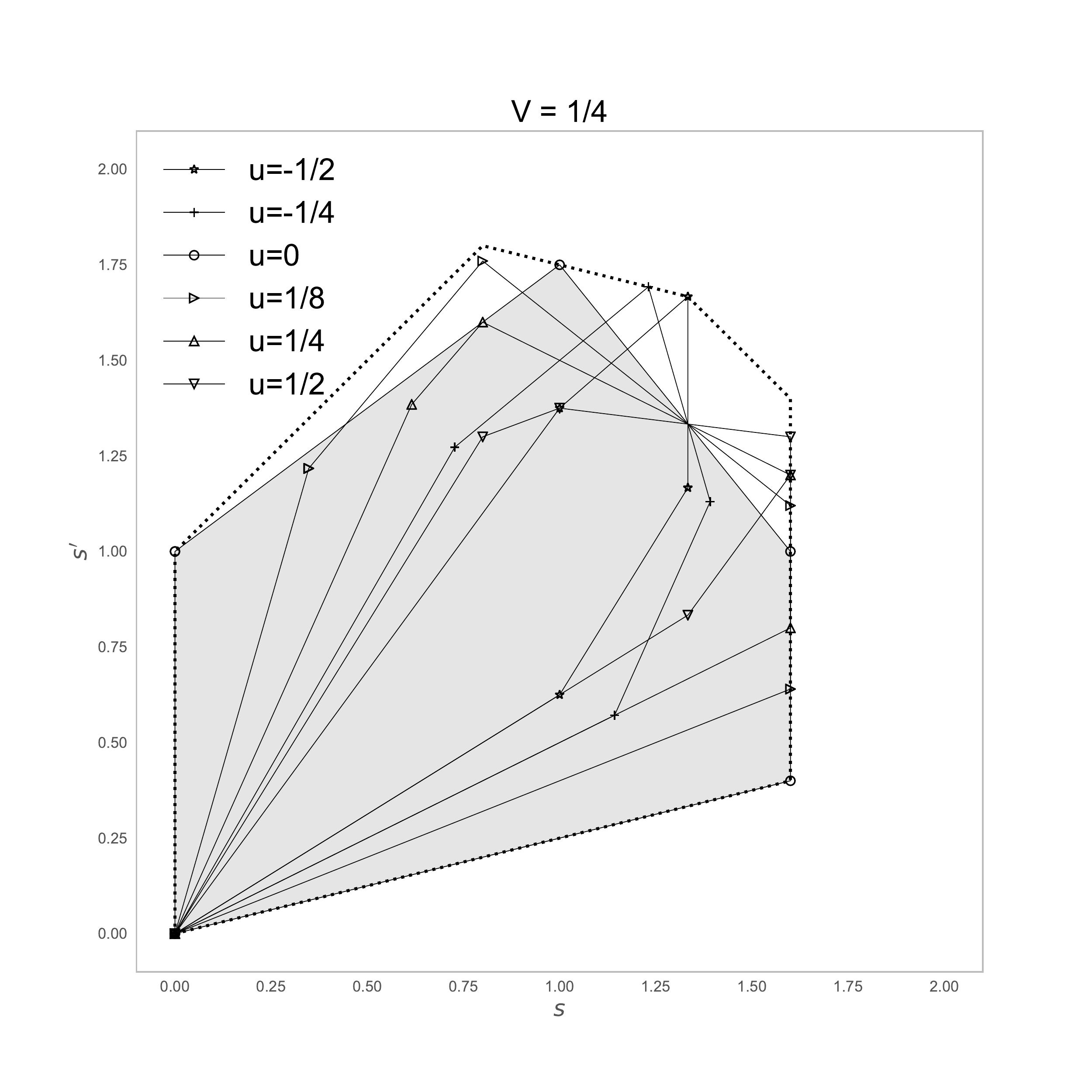}
  \includegraphics[width=.45 \textwidth, angle=0] {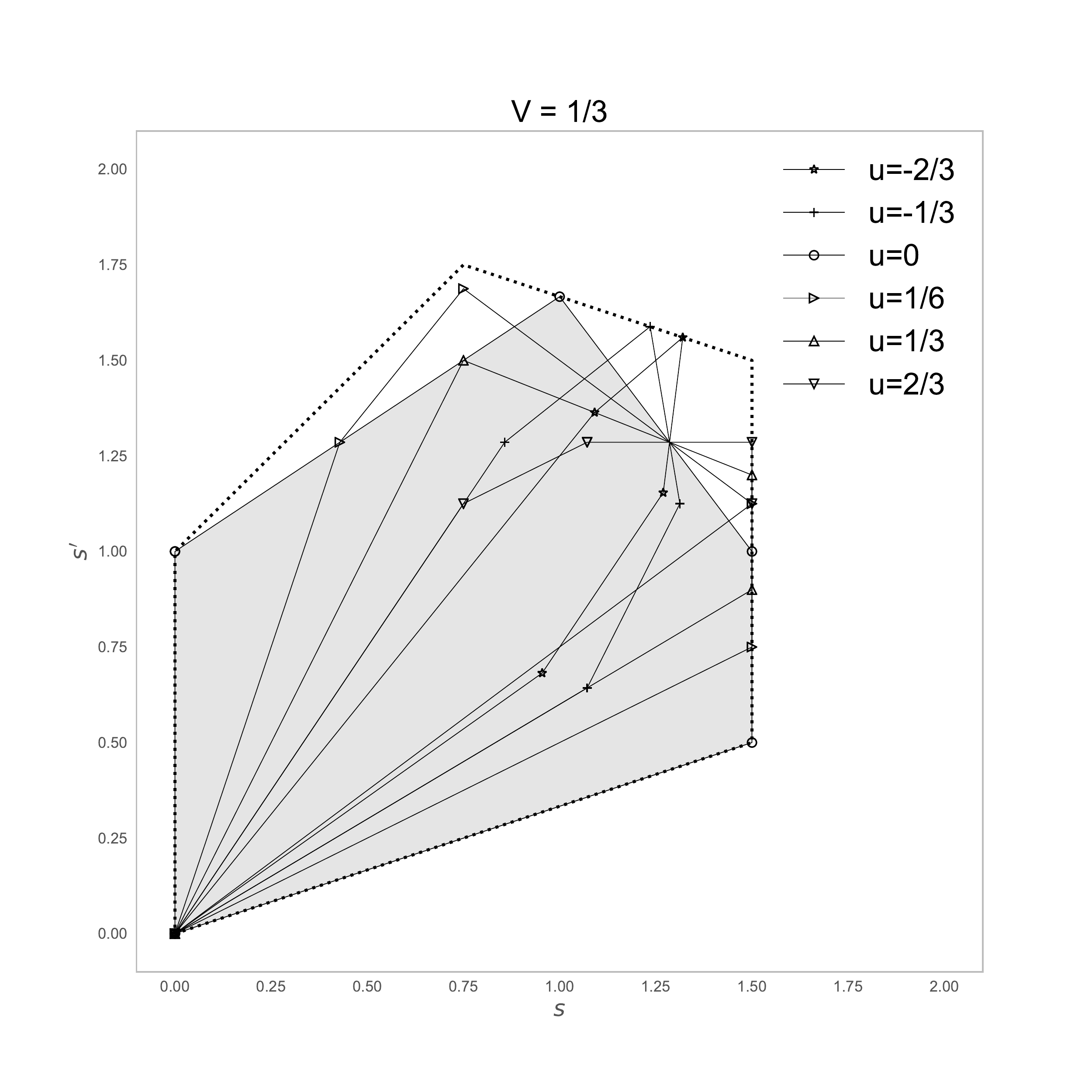}
  \hfill
  \includegraphics[width=.45 \textwidth, angle=0] {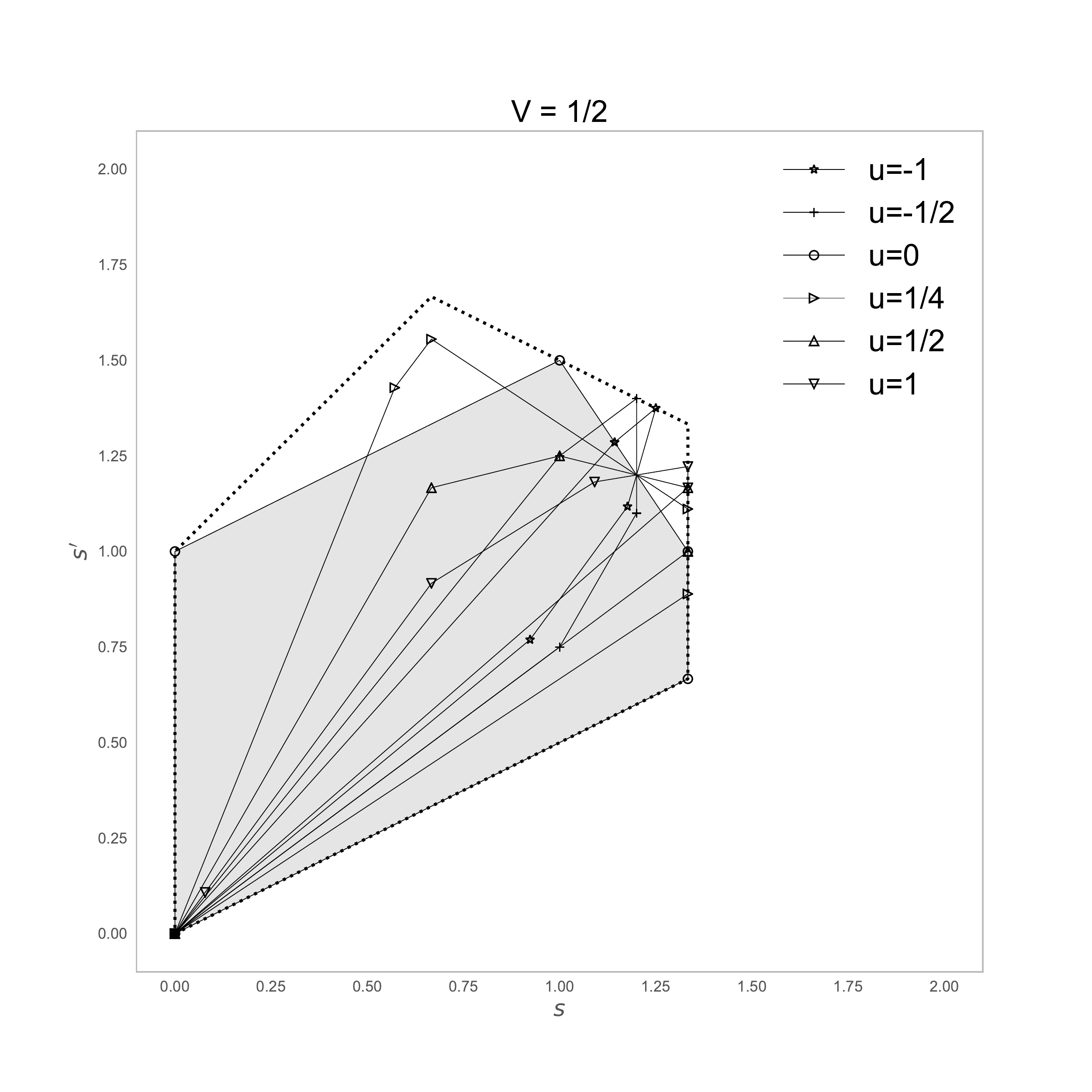}
  \includegraphics[width=.45 \textwidth, angle=0] {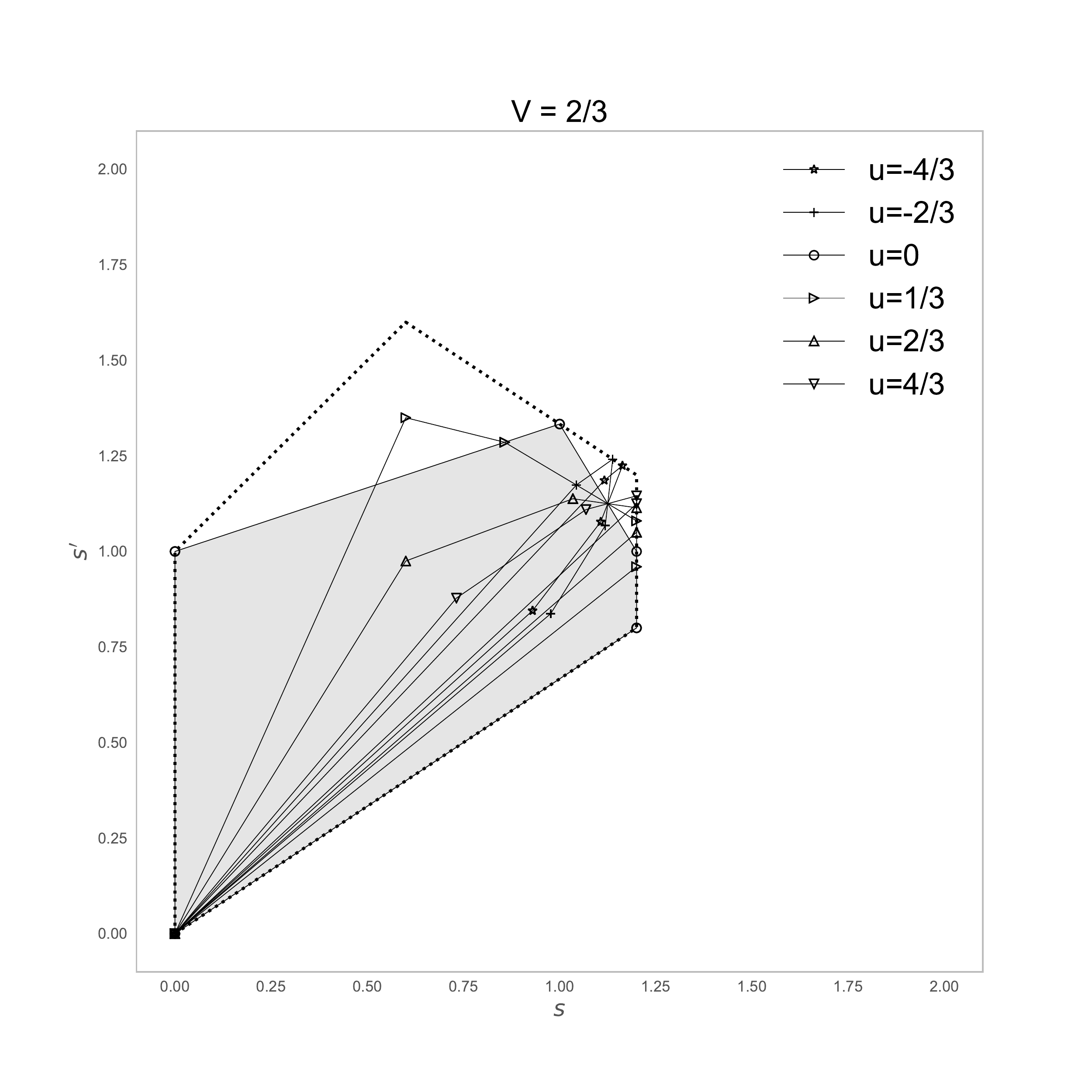}
  \hfill
  \includegraphics[width=.45 \textwidth, angle=0] {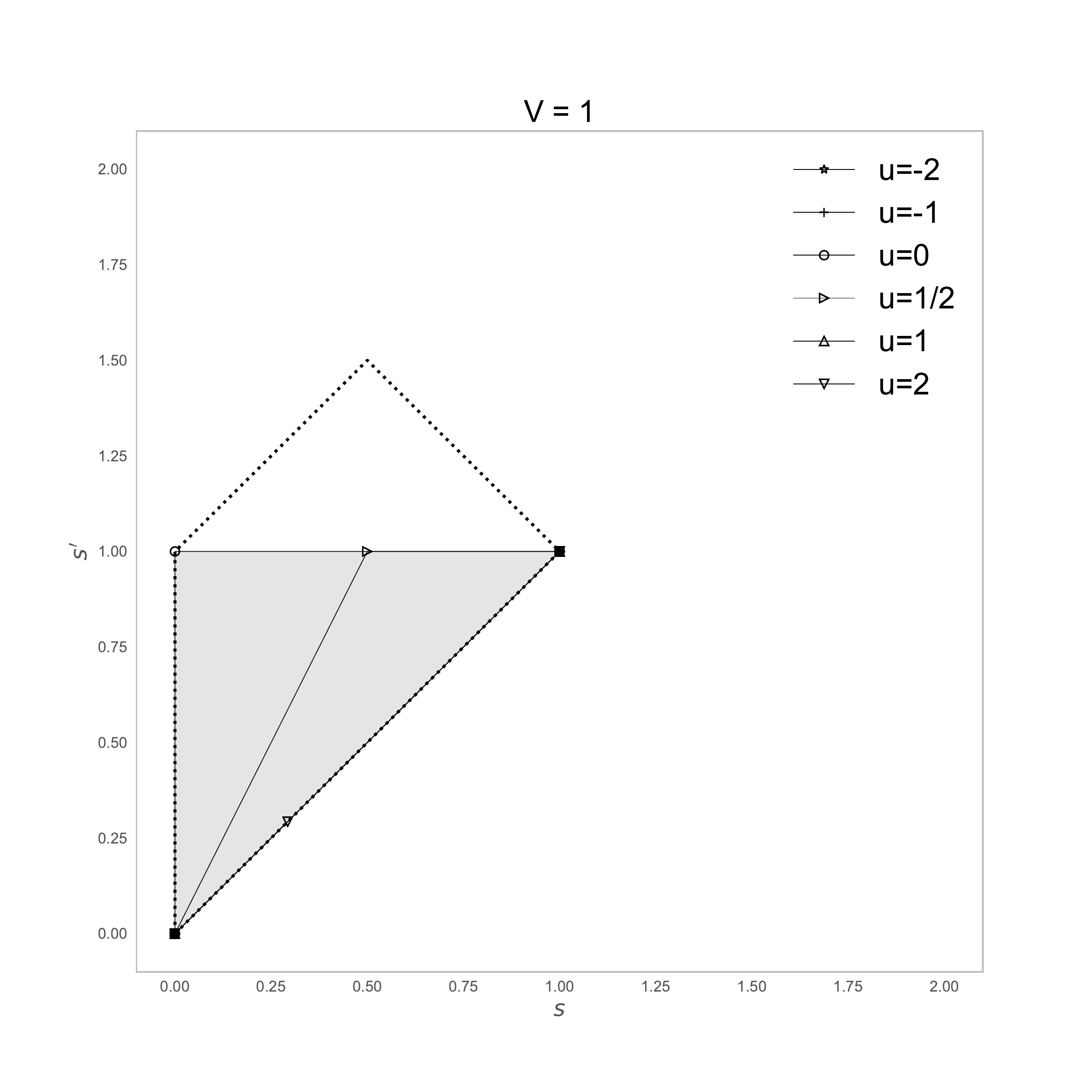}
  \caption{Nunerical study of necessary and sufficient stability regions}
  \label{cns-numeric-fig}
\end{figure}

For each value of the velocity \(V\), the necessary and sufficient stability regions given by the relations \eqref{cns-stabilite} are displayed for several values of the relative velocity: \(u\in\lbrace -2V, -V, 0, V/2, V, 2V\rbrace\). In dotted line, the necessary stability region of the Proposition~\ref{th:necessary} is added for remind. The particular value \(u=0\) is enhanced by filling the region in gray.

Some analysis can be drawn from these figures:
\begin{itemize}
  \item the stability region changes with the relative velocity;
  \item the maximal value of the first relaxation parameter \(s\) is obtained (not only) for \(u=0\);
  \item the stability region is not clearly more interessant (larger or including greater value of \(s\)) for \(u=V\);
  \item the point \((s,s')=(1,1)\) is always in the stability region.
\end{itemize}

To conclude this section, this notion of stability allows a large set of values for the relaxation parameters. If the scheme is used to simulate the hyperbolic advection equation without diffusive term, we can try to minimize the numerical diffusion while maintaining this stability property.
This task is complicated and out of the scope of the paper as the second-order term reads as a non-linear formula that links all the parameters \(s\), \(\alpha\), and \(V\).



\section{Numerical illustrations}
\label{sec:num}

In this section, we illustrate the stability property with numerical simulations involving the D1Q3 model with and without relative velocity used to simulate the linear advection equation. The stability is demonstrated for the parameters chosen according to the analysis presented in the previous sections. Oscillations are seen whenever the parameters go beyond the stability limits presented, as highlighted in the results.

The parameters chosen for the simulations are the following:

\begin{center}
  \begin{tabular}{p{3.5cm} p{.65cm} p{.65cm} p{.5cm} p{.5cm} p{3.5cm}}
    \toprule
    &$V$&$u$&$s$&$s'$&$\alpha$\\
    \midrule
    left  (stable)
    & 0.25 & 0.0  & 1.6 & 1.3 &  0.3076923076923076\\
    & 0.25 & 0.25 & 1.6 & 1.3 & -0.17548076923076938\\
    \midrule
    right (unstable)
    & 0.25 & 0.0  & 1.9 & 1.4 &  0.14285714285714302\\
    & 0.25 & 0.25 & 1.9 & 1.4 & -0.10491071428571441\\ 
    \bottomrule
  \end{tabular}
\end{center}

For the left figures, the parameters are chosen in order to satisfy the stability property. We observe numerically also  a maximum principle, even if it is not formally the stability notion that we investigate.
For the right figures, the parameters are chosen in order not to break the stability property. 

\begin{figure} [H]
    \includegraphics[width=.49 \textwidth ,angle=0]{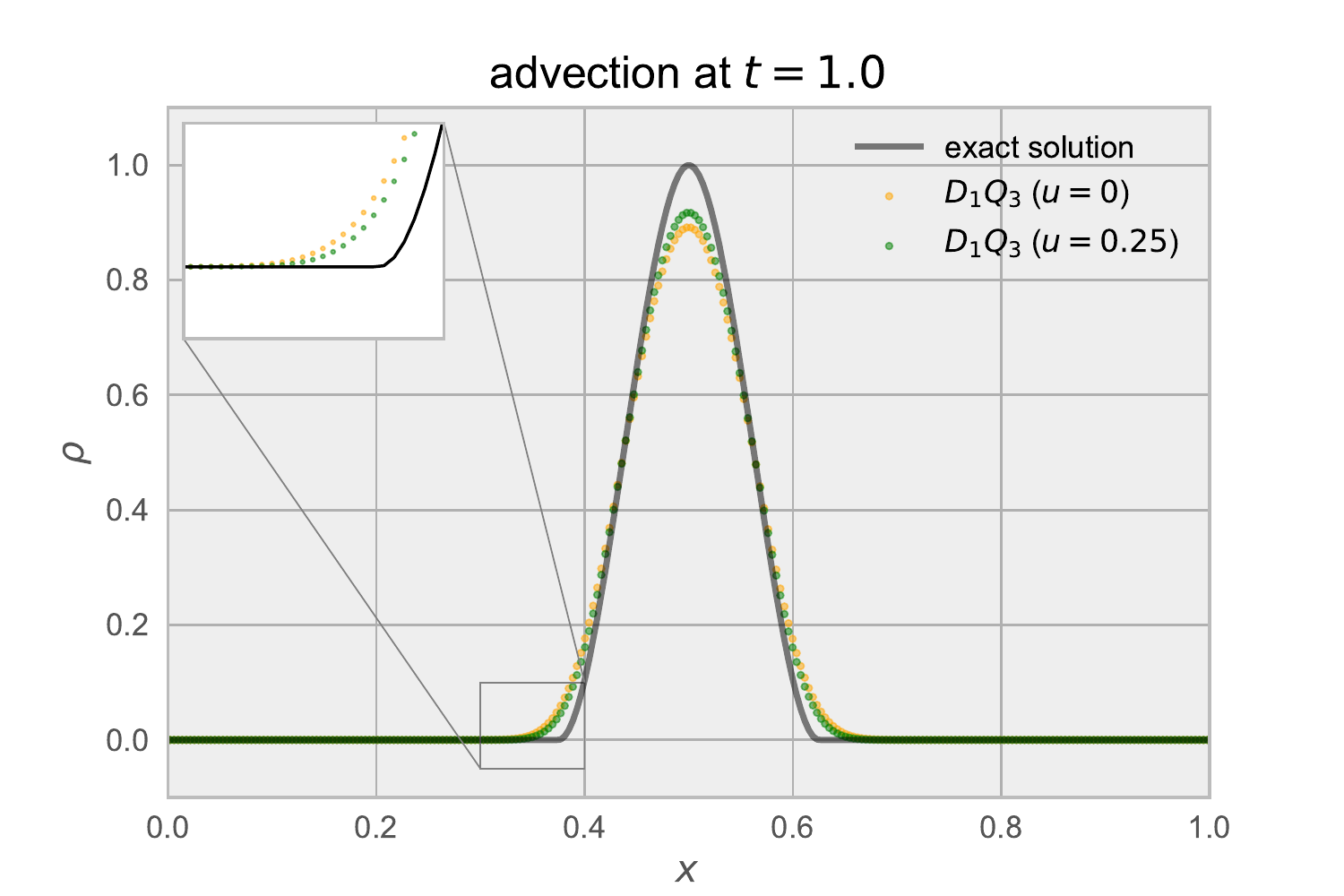} \hfill  
    \includegraphics[width=.49 \textwidth ,angle=0]{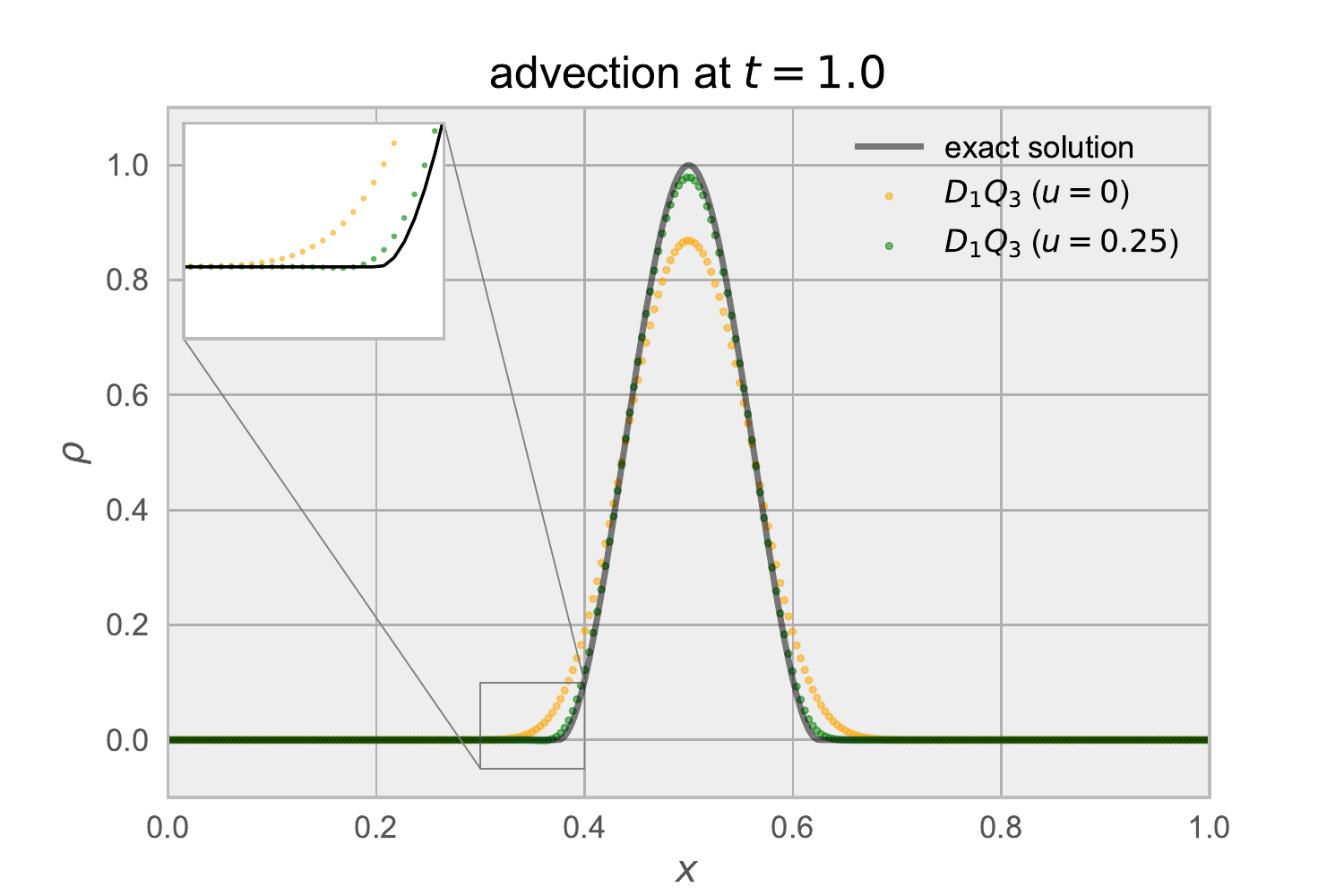}
    \caption{Smooth profile with a continuous derivative. The parameters for the D1Q3 are tuned in order to have (left)
    or not (right)   the non-negativity property} 
  \label{fig-numa}
\end{figure} 

\begin{figure} [H]
    \includegraphics[width=.49 \textwidth ,angle=0]{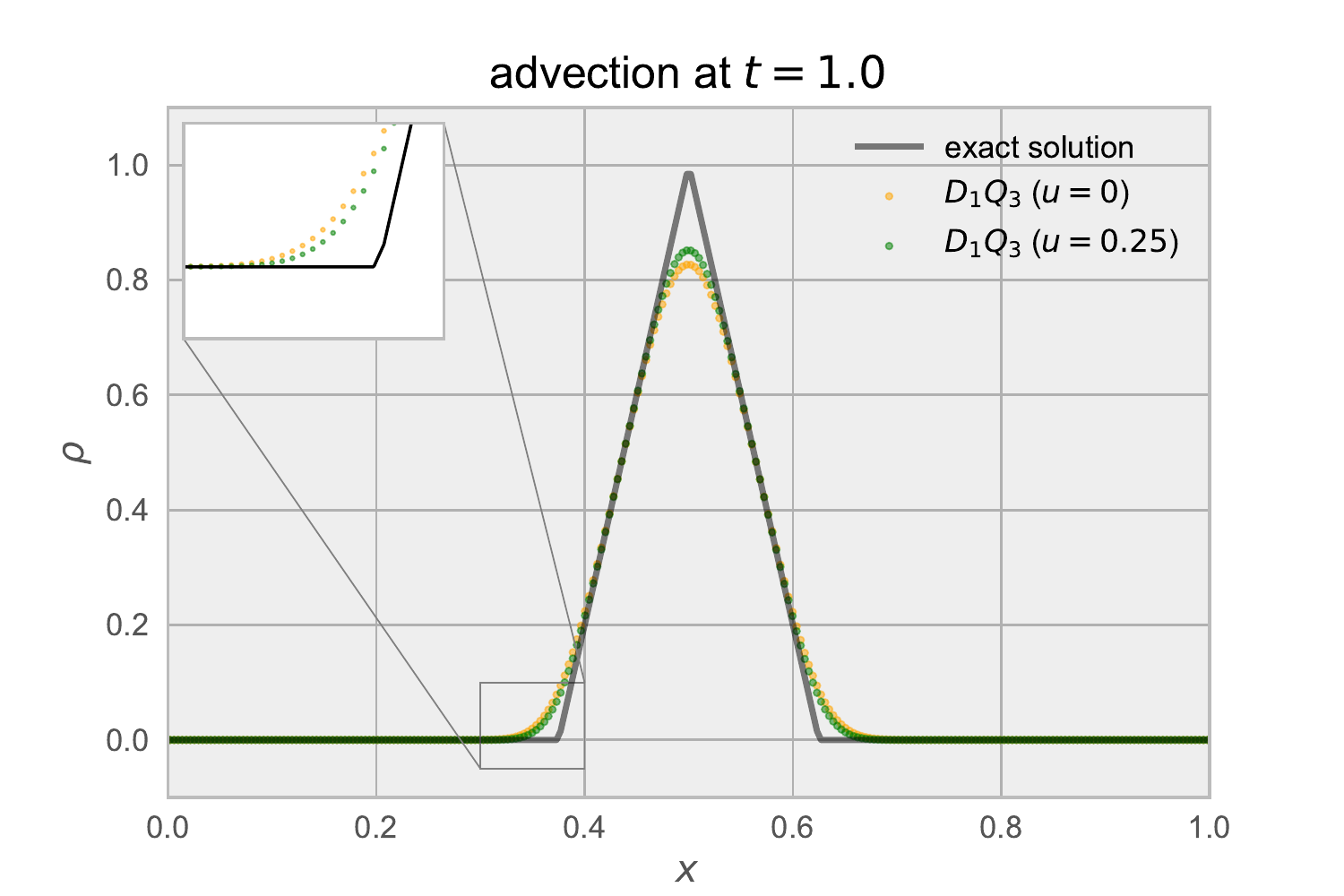} \hfill
    \includegraphics[width=.49 \textwidth ,angle=0]{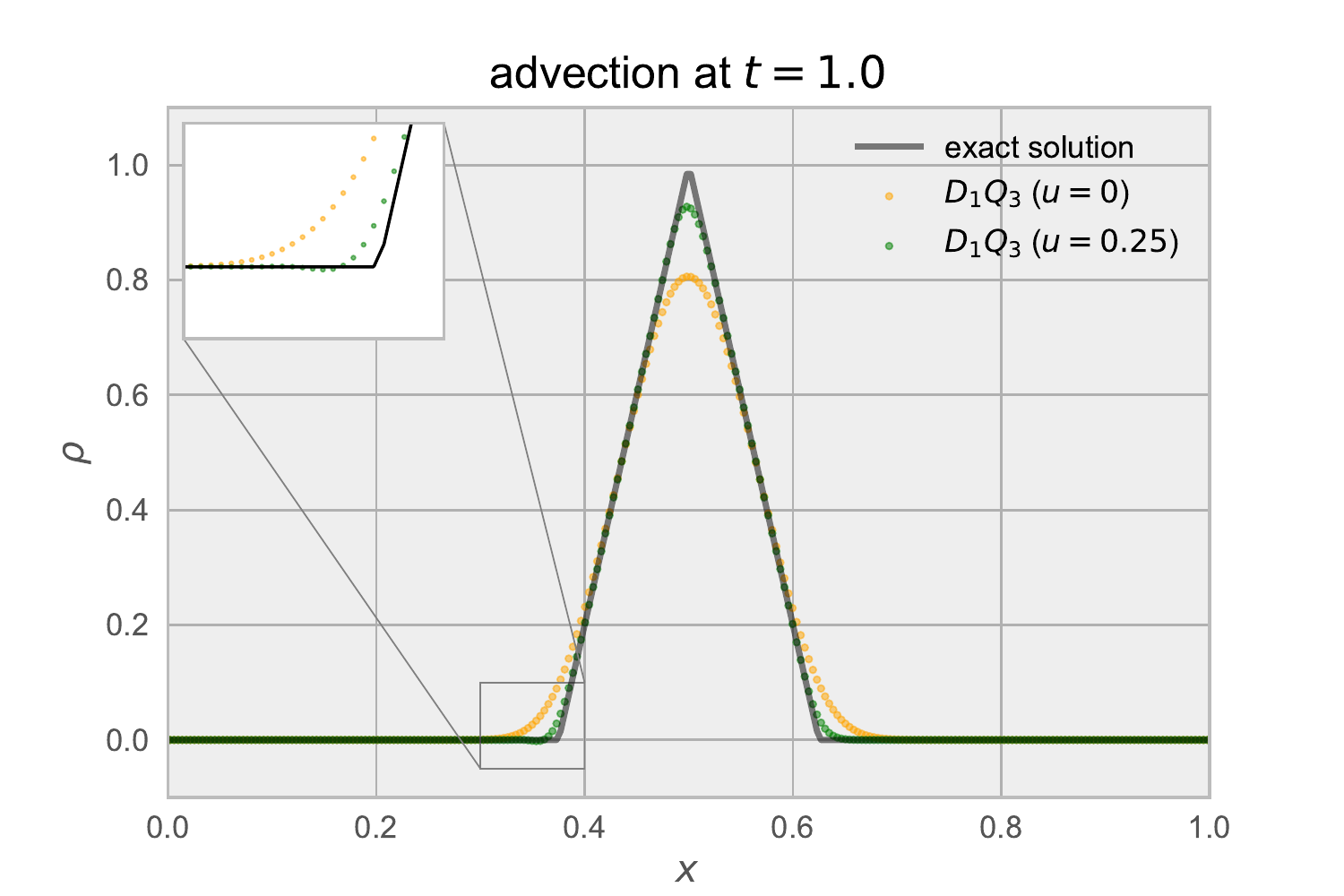}  
    \caption{Continuous profile. The parameters for the D1Q3 are tuned in order to have (left)
    or not (right)   the non-negativity property} 
  \label{fig-numb}
\end{figure} 

\begin{figure}[H]
    \includegraphics[width=.49 \textwidth ,angle=0]{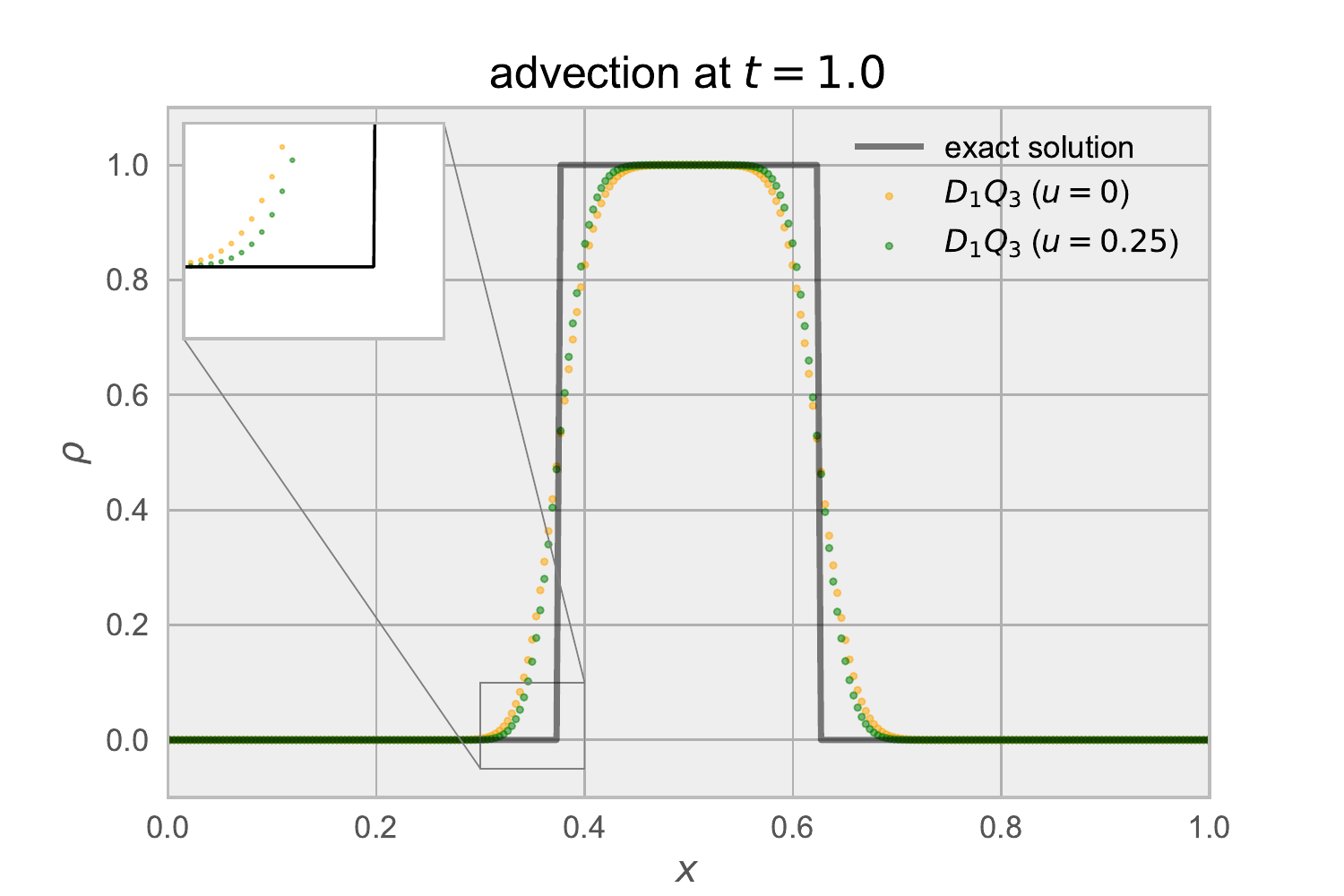} \hfill
    \includegraphics[width=.49 \textwidth ,angle=0]{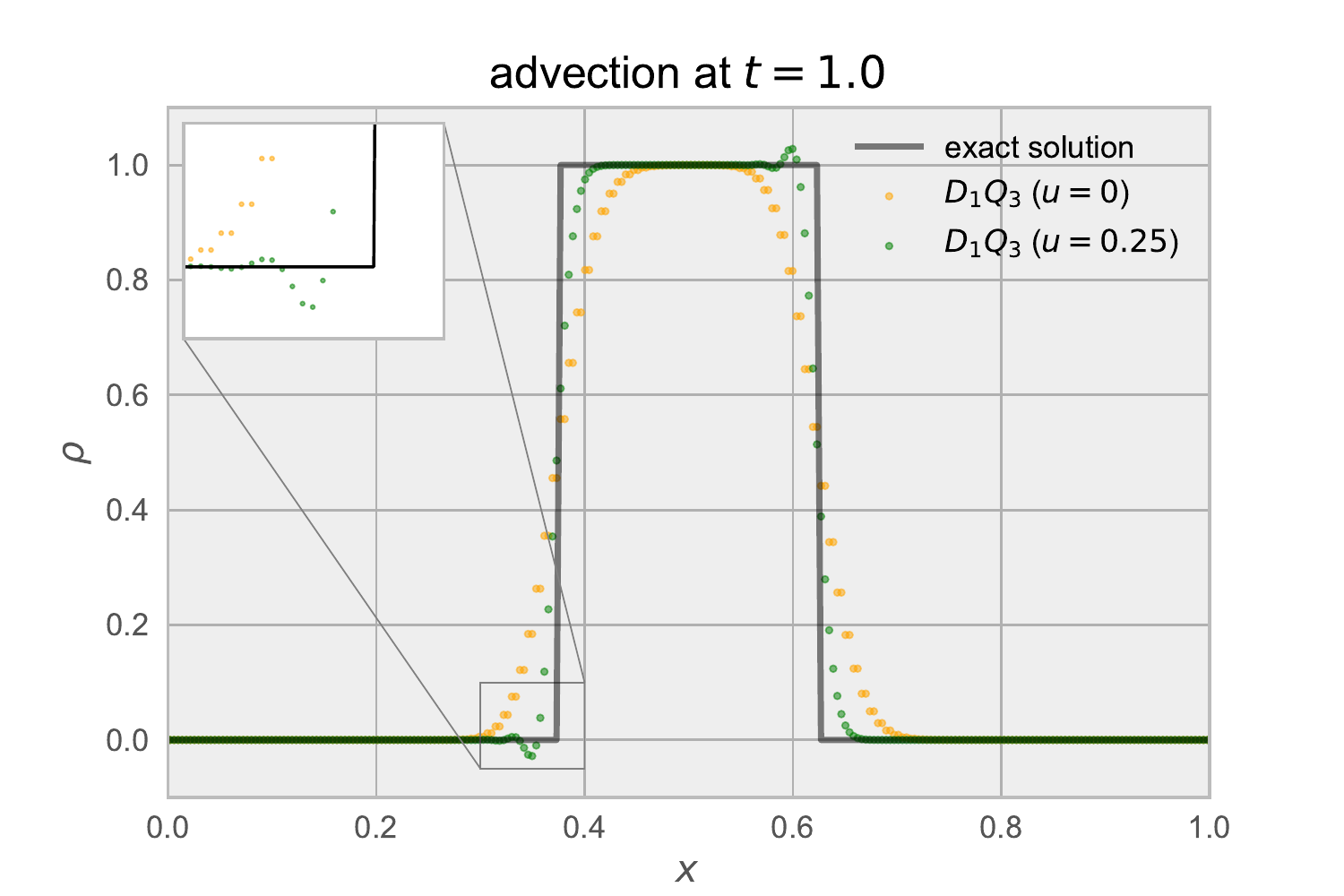} 
    \caption{Discontinuous profile. The parameters for the D1Q3 are tuned in order to have (left)
    or not (right)   the non-negativity property} 
  \label{fig-numc}
\end{figure} 

If the profile is smooth, we have observed that no numerical oscillations occur  even if the non-negativity property of the matrix is not satisfied. If the profile is just continuous, small negative values of the macroscopic quantity  are observed when our non-negativity property is not satisfied. Last but not least, classical oscillations are visible for discontinuous profiles if our non-negativity property is not satisfied. These  oscillations are eliminated when the non-negativity property of the matrix is realized.



\section{Conclusion}

In this contribution, we have investigated a stability property for a classical mono-dimensional linear three velocities lattice Boltzmann scheme with relative velocity. This property ensures that the particle distribution functions remain nonnegative if they are at initial time. 
We then give a necessary and sufficient condition to describe the stability region. The case without relative velocity is completely describe and simplest necessary conditions are given for the general case. 
We finally propose some numerical simulations that illustrate the stability property: even if the stability notion that we investigate is not exactly a cosntraint of convexity, a numerical maximum principle is observed if the parameters are in the stability region whereas numerical oscillations appear (in particular for non smooth profils) if the parameters are not there.

Moreover, relative velocities modify the stability array in a nontrivial manner. For instance, intuition might have suggested that the stability region for the relative velocity equal to the advection velocity contains all the others but it is realy not the case. For a given advection velocity, relative velocities cannot be used to increase the value of the first relaxation parameter, the one involved in the numerical diffusion.

The non-negativity of the relaxation matrix could be extended to nonliear schemes. The theoretical study will be much more technical and has not been performed. Nevertheless, numerical experiments for the Burgers equation show that the behavior of the D1Q3 scheme, and in particular the possibility or not of oscillations, is analogous to the linear case.


\bibliographystyle{plain}
\bibliography{Dubois_Graille_Rao}

\end{document}